\algrenewcommand\algorithmicrequire{\textbf{Input:}}
\algrenewcommand\algorithmicensure{\textbf{Output:}}
\patchcmd{\@maketitle}{\LARGE \@title}{\LARGE\bfseries\@title}{}{}
\renewcommand{\@seccntformat}[1]{\csname the#1\endcsname.\quad}
\definecolor{darkblue}{rgb}{0,0,.5}
\newtheoremstyle{mytheorem}
  {6pt} 
  {}    
  {\itshape}
  {}    
  {\bfseries}
  {.} 
  { } 
  {\thmname{#1}\thmnumber{ #2}\thmnote{ (#3)}}  
\newtheoremstyle{mydefinition}
  {6pt} 
  {}    
  {}
  {}    
  {\bfseries}
  {.} 
  { } 
  {\thmname{#1}\thmnumber{ #2}\thmnote{ (#3)}}  
\theoremstyle{mytheorem}
\newtheorem{theorem}{Theorem}[section]
\newtheorem{lemma}[theorem]{Lemma}
\newtheorem{corollary}[theorem]{Corollary}
\newtheorem{proposition}[theorem]{Proposition}
\theoremstyle{mydefinition}
\newtheorem{assumption}[theorem]{Assumption}
\newtheorem{example}[theorem]{Example}
\newtheorem{remark}[theorem]{Remark}
\setlist[enumerate]{nosep}
\newcounter{step}[theorem]
\newcommand\step[1]{%
	\refstepcounter{step}	
	\vskip 0.25\baselineskip
	\ifx\hfuzz#1\hfuzz
		\item[~~\(\triangleright\)~\textbf{Step~\arabic{step}.}]
	\else
		\item[~~\(\triangleright\)~\textbf{Step~\arabic{step}}] (\texttt{#1})\textbf{.}%
	\fi
}
\newcommand{\NN}{\ensuremath{\mathbb N}}
\newcommand{\RR}{\ensuremath{\mathbb R}}
\newcommand{\argmin}{\ensuremath{\operatorname*{argmin}}}
\newcommand{\dom}{\ensuremath{\operatorname{dom}}}
\newcommand{\epi}{\ensuremath{\operatorname{epi}}}
\newcommand{\prox}{\ensuremath{\operatorname{Prox}}}
\newcommand{\grad}{\nabla}
\newcommand{\gammainc}{\gamma_{\textnormal{inc}}}
\newcommand{\gammadec}{\gamma_{\textnormal{dec}}}
\definecolor{dgreen}{rgb}{0.0,0.7,0.0}
\begin{document}

\title{Projected proximal gradient trust-region algorithm\\ for nonsmooth optimization}

\author{
Minh N.~Dao\thanks{School of Science, RMIT University, Melbourne, VIC 3000, Australia. 
E-mail: \texttt{minh.dao@rmit.edu.au}.},
~
Hung M.~Phan\thanks{Department of Mathematics and Statistics, Kennedy College of Sciences, University of Massachusetts Lowell, Lowell, MA 01854, USA.
E-mail: \texttt{hung\char`_phan@uml.edu}.},
~and~
Lindon Roberts\thanks{School of Mathematics and Statistics, University of Sydney, Camperdown NSW 2006, Australia.\\ Email: \texttt{lindon.roberts@sydney.edu.au}.}
}

\date{\today}

\maketitle

\begin{abstract}
We consider trust-region methods for solving optimization problems where the objective is the sum of a smooth, nonconvex function and a nonsmooth, convex regularizer. We extend the global convergence theory of such methods to include worst-case complexity bounds in the case of unbounded model Hessian growth, and introduce a new, simple nonsmooth trust-region subproblem solver based on combining several iterations of proximal gradient descent with a single projection into the trust region, which meets the sufficient descent requirements for algorithm convergence and has promising numerical results.
\end{abstract}

{\small
\noindent{\bfseries AMS Subject Classifications:}
47H05; 49M37; 65K05; 65K10; 90C30.


\noindent{\bfseries Keywords:}
trust-region methods; nonsmooth optimization; proximal gradient descent; weak convexity.
}

\section{Introduction}

In this work, we consider trust-region methods for solving nonconvex, nonsmooth optimization problems of the form
\begin{align}
    \min_{x\in\RR^d} F(x) :=f(x) + h(x), \label{eq_composite}
\end{align}
where $f:\RR^d\to\RR$ is smooth (at least $C^1$) but nonconvex, and $h:\RR^d\to(-\infty,+\infty]$ is a proper, lower semicontinuous and (possibly nonsmooth) convex function.
The general form \eqref{eq_composite} encompasses convex-constrained optimization (where $h$ is the indicator function for the constraint set) \cite{Hough2022}, and problems from areas such as optimal control of PDEs \cite{Herzog2012} and imaging \cite{Ehrhardt2021}.
We assume that access to $f$ is available through at least zeroth- and first-order oracles (i.e.~at least $f$ and $\grad f$ are available to an algorithm) and access to $h$ is available through direct evaluations and its proximity operator.

Most commonly, particularly in data science \cite{Wright2022}, first-order methods such as the proximal gradient method or FISTA \cite{Beck2017} are used to solve \eqref{eq_composite}.
In contrast, trust-region methods are typically second-order, using either $\grad^2 f$ (if available) or a suitable quasi-Newton approximation such as symmetric rank-1 (SR1), and are well designed to exploit potential negative curvature in $f$ \cite{nocedal2006numerical}.

In this work, we build on a recent trust-region method for solving \eqref{eq_composite} \cite{Baraldi2023}.
The convergence analysis in \cite{Baraldi2023} allows for the model Hessians to grow unboundedly across iterations $k$, at a rate $O(k^t)$ as $k\to\infty$ for $t\in[0,1]$.
The uniformly bounded case $t=0$ is the most widely used assumption for theoretical results (e.g.~\cite{Cartis2022}), but the case $t=1$ can arise from using SR1 Hessian approximations \cite[Chapter 8.4.1.2]{Conn2000}, with convergence results in the smooth case (i.e.~$h\equiv 0$ in \eqref{eq_composite}) derived in \cite{Powell1984}; the algorithm does not converge if $t>1$ \cite{Toint1988}.
By extending very recent results for smooth problems \cite{Diouane2024}, we broaden the convergence analysis from \cite{Baraldi2023} and show worst-case complexity bounds 
that match those of the smooth case.
We also introduce a novel trust-region subproblem solver suitable for \eqref{eq_composite} based on a projected proximal gradient (PPG) iteration, and show that it meets the sufficient decrease requirements for the main trust-region algorithm.
Our numerical results show that our PPG method can outperform existing subproblem solvers \cite{Baraldi2023a}.

\subsection{Existing Works}

In the special case where $h$ is the indicator function for a convex constraint set, trust-region methods have been available for many years \cite{Conn1988,Conn1993}.
These methods achieve a worst-case complexity of $O(\epsilon^{-2})$ iterations to find an $\epsilon$-approximate first-order critical point, although using cubic regularization methods can improve this to $O(\epsilon^{-3/2})$ \cite{Cartis2012b,Cartis2022}, both in the case of uniformly bounded Hessians.
Similarly, we note there are trust-region methods for general nonsmooth $F$, not exploiting the $F=f+h$ structure, such as \cite{Qi1994,DennisJr.1995,Chen2013a,Garmanjani2016}, and \cite{Chen2023} considers $F=f+h$ but builds a smooth approximation to $F$ directly at each iteration.
Alternative composite formulations have also been studied, such as $F=f+h\circ c$ for $c$ smooth and nonconvex (and vector-valued) \cite{Grapiglia2016} and $F=g\circ c + h$ for nonsmooth convex $g,h$ and smooth nonconvex $c$ \cite{Burke2019,Burke2020}, or \eqref{eq_composite} restricted to a smooth manifold \cite{Zhao2024}.

For the specific case of \eqref{eq_composite}, proximal Newton methods extend the proximal gradient method to include second-order information about $f$ by modifying the form of the proximity operator (potentially making it significantly harder to evaluate).
This has allowed the development of second-order linesearch methods for \eqref{eq_composite} in the case of $f$ both convex \cite{Lee2014} and nonconvex \cite{Lee2019a,Kanzow2021}.

Trust-region methods for \eqref{eq_composite} but where $h$ is nonconvex and nonsmooth are considered in \cite{Aravkin2022,Aravkin2022a,Leconte2023}.
These methods achieve worst-case complexity bounds of $O(\epsilon^{-2})$ iterations for uniformly bounded Hessians, and develop several subproblem solvers tailored to specific choices of regularizer $h$.
By contrast, \cite{Kim2010} considers the case where $h$ and $f$ are both convex, yielding $O(\epsilon^{-1})$ worst-case complexity in this more restrictive setting.
We also note \cite{Liu2024a} considers the case where $f$ is nonconvex but the local quadratic approximations used in the trust-region subproblem are convex (such as in some formulations of nonlinear least-squares problems), and the subproblem is solved with accelerated first-order methods.

The most relevant works for our results are \cite{Baraldi2023,Baraldi2023a,Baraldi2024}.
These consider the same problem \eqref{eq_composite}, albeit in an infinite-dimensional setting.
In \cite{Baraldi2023}, the general algorithmic framework is given, including sufficient decrease conditions for the trust-region subproblem.
They show global convergence to first-order critical points under the assumption of potentially unbounded Hessians, specifically $\sum_{k=0}^{\infty} (1 + \max_{j\in\{0,\dots,k\}} \|H_j\|)^{-1} = \infty$, where $H_k$ is the model Hessian at iteration $k$, essentially assuming a growth rate of $\|H_k\| = O(k)$.
Global convergence of smooth trust-region methods under this Hessian growth condition were shown in \cite{Powell1984} in the unconstrained case, and \cite{Toint1988} for the convex-constrained case.
The theory from \cite{Toint1988} is extended to the case of general $h$ in \cite{Baraldi2023}, to give global convergence, and $O(\epsilon^{-2})$ complexity in the case of uniformly bounded model Hessians; this is extended to local convergence rates in \cite{Baraldi2024} (with different local rates depending on the quality of Hessian approximation in the model for $f$).

A variety of trust-region subproblem solvers for \eqref{eq_composite} are also studied in \cite{Baraldi2023,Baraldi2023a}.
These include nonsmooth generalizations of the standard Cauchy point and dogleg method for the smooth ($h\equiv 0$) case and a nonlinear conjugate gradient method. 
However, from the numerical results in \cite{Baraldi2023a}, the most successful method developed in these works is the so-called spectral proximal gradient (SPG) method.
This method essentially performs proximal gradient iterations with some degree of linesearch to enforce both sufficient decrease and feasibility with respect to the trust region constraint.

Lastly, we note that both \cite{Kim2010} and \cite{Ouyang2024} consider trust-region subproblem solvers for \eqref{eq_composite} based on combining an iterative solver with a final projection step, which is similar to the approach in our new subproblem solver.
However, \cite{Kim2010} considers only the case where $f$ is convex, and \cite{Ouyang2024} uses a semismooth Newton formulation.

\subsection{Contributions}

We provide two main contributions in this work.

Firstly, we build on the theoretical analysis of the general nonsmooth trust-region method from \cite{Baraldi2023} to include worst-case complexity in the case of unbounded model Hessians.
Broadly speaking, if the model Hessians grow at a rate $\|H_k\|=O(k^t)$ for $t\in[0,1]$, then we prove convergence to an $\epsilon$-approximate first-order critical point after at most $O(\epsilon^{-2/(1-t)})$ iterations if $t\in[0,1)$, and $\tilde{O}(e^{-c\epsilon^{-2}})$ iterations (for some $c>0$) if $t=1$.
Our analysis is based on recent complexity results for the smooth case $h\equiv 0$ \cite{Diouane2024}; our worst-case complexity bounds in the nonsmooth case are identical to the smooth case.

Secondly, we introduce a new trust-region subproblem solver suitable for use in the main algorithm, which we call a {\em projected proximal gradient (PPG) method}.
Our approach is simpler than the SPG method from \cite{Baraldi2023,Baraldi2023a} in that we perform several iterations of proximal gradient descent on our model, and only once, at the end of our iterations, compute a projection to satisfy the trust region constraint.
We show that this method satisfies the sufficient decrease conditions required for global convergence (and complexity bounds) of the main trust-region algorithm.
We numerically compare our new PPG method against SPG on a large collection of $\ell_1$-regularized CUTEst \cite{Gould2015} problems, and show that PPG can outperform SPG (in the sense of the whole trust-region algorithm converging more quickly) when high accuracy solutions are desired.
We also observe that PPG is better able to make use of larger subproblem iteration budget if available.

\subsection{Organization and Notation}

In Section~\ref{sec_background}, we provide the relevant technical background for nonsmooth functions.
The main trust-region algorithm and its worst-case complexity in the unbounded Hessian case are given in Section~\ref{sec_complexity}.
In Section~\ref{sec_subproblem}, we introduce our new subproblem solver, and we present our numerical results in Section~\ref{sec_numerics}.

Throughout, we use $\|\cdot\|$ to represent the Euclidean norm in $\RR^d$ and the matrix 2-norm.

\section{Preliminaries} 
\label{sec_background}

Given $h\colon \RR^d\to (-\infty, +\infty]$, its \emph{domain} is $\dom h :=\{x\in \RR^d: h(x) <+\infty\}$ and its \emph{epigraph} is $\epi h :=\{(x, \rho)\in \RR^d\times \mathbb{R}: \rho \geq h(x)\}$. 
We recall that $h$ is \emph{proper} if $\dom h\neq \varnothing$, \emph{lower semicontinuous} if $\epi h$ is a closed set, and \emph{convex} if $\epi h$ is a convex set.

Let $h\colon \RR^d\to (-\infty, +\infty]$ be proper. The \emph{regular subdifferential} of $h$ at $x\in \RR^d$ is defined by
\begin{align}
    \partial h(x) :=\left\{u\in \RR^d:\; \liminf_{y\to\RR^d} \frac{h(y) -h(x) -u^\top(y -x)}{\|y -x\|}\geq 0\right\},
\end{align}
if $x\in\dom h$, and $\partial h(x) :=\varnothing$ otherwise.
When $h$ is convex, the regular subdifferential coincides with the \emph{convex subdifferential}, see, e.g., \cite[Theorem~1.93]{mordukhovich2006},
\begin{align}\label{eq:cvxsubdiff}
    \partial h(x) =\{u\in \RR^d:\; \forall y\in \RR^d,\; h(x) +u^\top(y -x)\leq h(y)\}.
\end{align}

\begin{proposition}\label{p:chainrule-Mor06}
Let $h\colon \RR^d\to (-\infty, +\infty]$ be proper and $f\colon \RR^d\to (-\infty, +\infty]$ be differentiable at $x\in\dom h$. Then
\begin{align}
\partial(f +h)(x) =\nabla f(x) +\partial h(x).
\end{align}    
\end{proposition}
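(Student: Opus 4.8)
The plan is to prove the two-sided inclusion $\partial(f+h)(x) = \nabla f(x) + \partial h(x)$ directly from the definition of the regular subdifferential, exploiting the fact that differentiability of $f$ at $x$ means $f(y) = f(x) + \nabla f(x)^\top(y-x) + o(\|y-x\|)$ as $y \to x$. The key observation is that adding a function which is differentiable at $x$ only perturbs the difference quotient in the definition of $\partial$ by a term that vanishes in the limit, so the $\liminf$ condition is preserved under the substitution $u \leftrightarrow u - \nabla f(x)$.

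For the inclusion $\nabla f(x) + \partial h(x) \subseteq \partial(f+h)(x)$: take $u \in \partial h(x)$, so that $\liminf_{y\to x} \frac{h(y)-h(x)-u^\top(y-x)}{\|y-x\|} \geq 0$. I would then write, for $y \neq x$,
\begin{align}
\frac{(f+h)(y) - (f+h)(x) - (\nabla f(x)+u)^\top(y-x)}{\|y-x\|} = \frac{h(y)-h(x)-u^\top(y-x)}{\|y-x\|} + \frac{f(y)-f(x)-\nabla f(x)^\top(y-x)}{\|y-x\|}.
\end{align}
The second term tends to $0$ by differentiability of $f$ at $x$, and the first has nonnegative $\liminf$; using the standard fact that $\liminf(a_y + b_y) \geq \liminf a_y + \lim b_y$ when $\lim b_y$ exists, the whole expression has nonnegative $\liminf$, giving $\nabla f(x) + u \in \partial(f+h)(x)$. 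Note $x \in \dom h = \dom(f+h)$ since $f$ is finite at $x$, so the domain bookkeeping is fine.

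For the reverse inclusion $\partial(f+h)(x) \subseteq \nabla f(x) + \partial h(x)$: take $v \in \partial(f+h)(x)$ and set $u := v - \nabla f(x)$; I would run the same algebraic identity in reverse, writing the difference quotient for $h$ with multiplier $u$ as the difference quotient for $f+h$ with multiplier $v$ minus the difference quotient for $f$ with multiplier $\nabla f(x)$, and again pass to the $\liminf$ using that the subtracted $f$-term converges to $0$ (so subtracting it does not change the $\liminf$). This shows $u \in \partial h(x)$, i.e.\ $v \in \nabla f(x) + \partial h(x)$.

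There is no real obstacle here — this is a routine argument — but the one point requiring a little care is the handling of $\liminf$ under addition: one must be sure to use that the $f$-term has an honest limit (equal to $0$), not merely a $\liminf$, so that $\liminf(a_y+b_y) = \liminf a_y + \lim b_y$ holds with equality in both directions and the argument is genuinely symmetric. I would also remark that this is a known result (it is, e.g., a special case of the sum rule in \cite{mordukhovich2006}), so the proof could alternatively just cite that reference; but giving the short self-contained argument above is cleanest.
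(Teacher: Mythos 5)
Your argument is correct, but it is a genuinely different route from the paper: the paper does not prove this statement at all, it simply cites \cite[Proposition~1.107(i)]{mordukhovich2006}, whereas you give a short self-contained proof directly from the definition of the regular subdifferential. Your two-sided-inclusion argument is sound: the decomposition of the difference quotient is exact, the $f$-term has an honest limit equal to $0$ by differentiability at $x$, and you correctly isolate the only delicate point, namely that $\liminf(a_y+b_y)=\liminf a_y+\lim b_y$ when $\lim b_y$ exists and is finite, which is what makes the argument reversible and yields equality rather than one inclusion. Two minor bookkeeping remarks: (i) since $h$ may take the value $+\infty$ arbitrarily close to $x$, the quotients live in $\left]-\infty,+\infty\right]$, so the additivity of $\liminf$ should be read in extended arithmetic (this causes no trouble because the $f$-quotient is eventually finite); (ii) differentiability of the extended-real-valued $f$ at $x$ implicitly forces $f$ to be finite near $x$, which is what legitimizes treating the $f$-quotient as a genuine real-valued term with limit $0$. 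What each approach buys: the citation keeps the paper short and defers to a general sum rule valid in much broader (limiting subdifferential) settings, while your direct proof is elementary, transparent, and makes clear that only differentiability of $f$ at the single point $x$ is used, with no convexity or lower semicontinuity of $h$ needed.
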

\begin{proof}
See~{\cite[Proposition~1.107(i)]{mordukhovich2006}}.
\end{proof}

We recall that the \emph{proximity operator} of $h$ with parameter $\gamma>0$ at $x\in \RR^d$ is
\begin{align}
    \prox_{\gamma h}(x) :=\argmin_{z\in \RR^d} \left(h(z) +\frac{1}{2\gamma}\|z -x\|^2\right).
\end{align}

\begin{proposition} \label{p:prox}
Let $h\colon \RR^d\to (-\infty, +\infty]$ be a lower semicontinuous convex function. Then 
\begin{enumerate}
\item 
$\prox_{\gamma h}$ is single-valued, and $u =\prox_{\gamma h}(x)$ if and only if $x -u\in \gamma \partial h(u)$.
\item 
$\prox_{\gamma h}$ is nonexpansive, i.e., for all $x,y\in\RR^d$, $\|\prox_{\gamma h}(x) - \prox_{\gamma h}(y)\| \leq \|x-y\|$.
\end{enumerate}
\end{proposition}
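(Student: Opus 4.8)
The plan is to treat the two assertions separately, exploiting the fact that $\prox_{\gamma h}(x)$ is by definition the set of minimizers of the function $g_x\colon z\mapsto h(z)+\tfrac{1}{2\gamma}\|z-x\|^2$.

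For (i), I would first establish that $g_x$ has a unique minimizer. Since $\dom g_x=\dom h\neq\varnothing$, the function $g_x$ is proper; it is lower semicontinuous as the sum of the lsc function $h$ and a continuous function; and it is coercive, because a proper lsc convex $h$ is minorized by a continuous affine function, so $g_x(z)\geq a^\top z+b+\tfrac{1}{2\gamma}\|z-x\|^2\to+\infty$ as $\|z\|\to\infty$. The Weierstrass theorem then gives existence of a minimizer, and strict convexity of $g_x$ (the quadratic term is strictly convex and $h$ is convex) gives uniqueness; hence $\prox_{\gamma h}$ is single-valued. For the optimality characterization, $u=\prox_{\gamma h}(x)$ iff $g_x(u)\leq g_x(z)$ for all $z\in\RR^d$, which by the convex subgradient inequality \eqref{eq:cvxsubdiff} applied to $g_x$ is exactly $0\in\partial g_x(u)$. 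Applying Proposition~\ref{p:chainrule-Mor06} to $g_x=\big(\tfrac{1}{2\gamma}\|\cdot-x\|^2\big)+h$---valid since the quadratic is differentiable---yields $\partial g_x(u)=\tfrac{1}{\gamma}(u-x)+\partial h(u)$, so $0\in\partial g_x(u)$ is equivalent to $x-u\in\gamma\partial h(u)$.

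For (ii), let $x,y\in\RR^d$ and set $u:=\prox_{\gamma h}(x)$ and $v:=\prox_{\gamma h}(y)$. By part (i), $\tfrac{1}{\gamma}(x-u)\in\partial h(u)$ and $\tfrac{1}{\gamma}(y-v)\in\partial h(v)$. Writing the subgradient inequality \eqref{eq:cvxsubdiff} twice---once at $u$ evaluated at $v$, once at $v$ evaluated at $u$---and adding, the $h$-values cancel and one obtains the monotonicity estimate $\scal{(x-u)-(y-v)}{u-v}\geq 0$. Rearranging this gives $\scal{x-y}{u-v}\geq\|u-v\|^2$, and Cauchy--Schwarz yields $\|x-y\|\,\|u-v\|\geq\|u-v\|^2$, so $\|u-v\|\leq\|x-y\|$ (trivially if $u=v$).

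The one step that needs real care is single-valuedness in (i): existence and uniqueness must be argued separately, the former from coercivity and lower semicontinuity, the latter from strict convexity, and it is the convexity of $g_x$ that makes the first-order condition $0\in\partial g_x(u)$ both necessary and sufficient for the global minimizer. The remaining manipulations are routine once the chain rule from Proposition~\ref{p:chainrule-Mor06} and the convex subgradient inequality are in hand.
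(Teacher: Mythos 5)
Your argument is correct. The paper does not prove this proposition itself but simply cites \cite[Theorems 6.39 \& 6.42]{Beck2017}, and your proof (existence and uniqueness of the minimizer of $z\mapsto h(z)+\tfrac{1}{2\gamma}\|z-x\|^2$ via coercivity, lower semicontinuity and strict convexity; the optimality characterization via \eqref{eq:cvxsubdiff} and the sum rule of Proposition~\ref{p:chainrule-Mor06}; and nonexpansiveness via monotonicity of $\partial h$ plus Cauchy--Schwarz) is precisely the standard argument underlying those cited results.
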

\begin{proof}
See, e.g.~\cite[Theorems 6.39 \& 6.42]{Beck2017}.
\end{proof}

For problem \eqref{eq_composite}, we will assume $f$ is differentiable and we have access to the proximity operator for $h$ (see Assumption~\ref{ass_smoothness} below), and so we consider the first-order stationarity measure given by
\begin{align}
    \pi(x,\gamma) := \frac{1}{\gamma}\|\prox_{\gamma h}(x-\gamma \grad f(x))-x\|,
    \label{e:pigamma}
\end{align}
for any $x\in\RR^d$ and $\gamma>0$, which is the same measure considered in \cite{Baraldi2023a}.
When we consider our main algorithm and generate iterates $x_k$, we will use the shorthand $\pi_k(\gamma) := \pi(x_k, \gamma)$.

The function $\pi(x,\gamma)$ has several useful properties for a stationarity measure, such as $\pi(x,\gamma) \geq 0$ with $\pi(x,\gamma) = 0$ if and only if $0\in \grad f(x) + \partial h(x)$, and being continuous in both $x$ and $\gamma$ \cite[Proposition 1]{Baraldi2023a}.

\begin{proposition}
\label{p:pi_properties}
For any $x\in\RR^d$ and $0 < \gamma_1 \leq \gamma_2$,
    \begin{align}
        \pi(x,\gamma_2) \leq \pi(x,\gamma_1) \leq \frac{\gamma_2}{\gamma_1}\pi(x,\gamma_2).
    \end{align}    
\end{proposition}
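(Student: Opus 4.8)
The plan is to reduce both inequalities to an elementary monotonicity property of the proximal-gradient step. Fix $x\in\RR^d$, write $g:=\grad f(x)$, and for $i\in\{1,2\}$ set $x_i:=\prox_{\gamma_i h}(x-\gamma_i g)$, so that $\pi(x,\gamma_i)=\|x_i-x\|/\gamma_i$. Applying Proposition~\ref{p:prox}(i) with the point $x-\gamma_i g$ gives $(x-\gamma_i g)-x_i\in\gamma_i\partial h(x_i)$, i.e.
\[
v_i:=\frac{x-x_i}{\gamma_i}-g\in\partial h(x_i).
\]

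Next I would use convexity of $h$. Invoking the subgradient inequality \eqref{eq:cvxsubdiff} for $v_1\in\partial h(x_1)$ evaluated at $x_2$, and for $v_2\in\partial h(x_2)$ evaluated at $x_1$, and adding the two resulting inequalities, the $h$-values cancel, leaving $\langle v_1-v_2,\,x_1-x_2\rangle\geq 0$. Since the terms involving $g$ cancel in $v_1-v_2$, substituting and abbreviating $a:=x_1-x$ and $b:=x_2-x$ yields $\langle \gamma_1^{-1}a-\gamma_2^{-1}b,\,a-b\rangle\leq 0$. Expanding and rearranging gives
\[
\frac{\|a\|^2}{\gamma_1}+\frac{\|b\|^2}{\gamma_2}\leq\left(\frac{1}{\gamma_1}+\frac{1}{\gamma_2}\right)\langle a,b\rangle.
\]
In particular the left-hand side being nonnegative forces $\langle a,b\rangle\geq 0$, so Cauchy--Schwarz gives $\langle a,b\rangle\leq\|a\|\,\|b\|$. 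Writing $r:=\|a\|=\gamma_1\pi(x,\gamma_1)$ and $s:=\|b\|=\gamma_2\pi(x,\gamma_2)$, the last display reduces to the scalar inequality
\[
\frac{r(r-s)}{\gamma_1}\leq\frac{s(r-s)}{\gamma_2},
\qquad\text{equivalently}\qquad
(r-s)\left(\frac{r}{\gamma_1}-\frac{s}{\gamma_2}\right)\leq 0.
\]

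To conclude, I would read off both bounds from this scalar inequality together with $\gamma_1\leq\gamma_2$. If $r>s$, dividing by $r-s>0$ gives $r/\gamma_1\leq s/\gamma_2$, contradicting $r/\gamma_1\geq r/\gamma_2>s/\gamma_2$; hence $r\leq s$, i.e.\ $\gamma_1\pi(x,\gamma_1)\leq\gamma_2\pi(x,\gamma_2)$, which is exactly $\pi(x,\gamma_1)\leq(\gamma_2/\gamma_1)\pi(x,\gamma_2)$. For the other bound, $r\leq s$ means $r-s\leq 0$, so the factored inequality forces $r/\gamma_1\geq s/\gamma_2$ (the case $r=s$ being immediate from $\gamma_1\leq\gamma_2$), i.e.\ $\pi(x,\gamma_1)\geq\pi(x,\gamma_2)$. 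The computation is routine; the only places needing slight care are checking $\langle a,b\rangle\geq 0$ before applying Cauchy--Schwarz (so as not to reverse the inequality) and the boundary cases $r=s$ and $s=0$ in the final case analysis, so I do not expect a real obstacle here.
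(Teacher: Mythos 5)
Your proof is correct. Note, however, that the paper does not prove this proposition at all: it simply cites \cite[Theorem~10.9]{Beck2017}, which is exactly this monotonicity property of the prox-gradient mapping with respect to the stepsize. What you have written is therefore a self-contained derivation of the cited result, and the route is the natural one: the prox optimality condition from Proposition~\ref{p:prox}(i) gives $v_i=\gamma_i^{-1}(x-x_i)-\grad f(x)\in\partial h(x_i)$, monotonicity of $\partial h$ (adding the two subgradient inequalities \eqref{eq:cvxsubdiff}) gives $\langle \gamma_1^{-1}a-\gamma_2^{-1}b,\,a-b\rangle\leq 0$, and Cauchy--Schwarz reduces everything to the scalar inequality $(r-s)\bigl(r/\gamma_1-s/\gamma_2\bigr)\leq 0$, from which both bounds follow by the case analysis you give; the edge cases $r=s$ and $s=0$ are handled correctly. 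One tiny simplification: the observation $\langle a,b\rangle\geq 0$ is not actually needed before invoking Cauchy--Schwarz, since the upper bound $\langle a,b\rangle\leq\|a\|\,\|b\|$ holds unconditionally and the only thing required is positivity of the coefficient $\gamma_1^{-1}+\gamma_2^{-1}$. Compared with leaning on the textbook citation, your argument has the advantage of making explicit that only convexity of $h$ and the prox characterization are used, with no smoothness of $f$ beyond differentiability at $x$.
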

\begin{proof}
     See~\cite[Theorem 10.9]{Beck2017}.
\end{proof}

For $\lambda\in \mathbb{R}$, a function $f\colon \RR^d\to (-\infty, +\infty]$ is said to be \emph{$\lambda$-convex} if for all $x, y\in \dom f$ and all $\alpha\in (0, 1)$,
\begin{align}
f((1 -\alpha)x +\alpha y) +\frac{\lambda}{2}\alpha(1 -\alpha)\|x -y\|^2\leq (1 -\alpha)f(x) +\alpha f(y).
\end{align}
It is well known that $f$ is $\lambda$-convex if and only if $f -\frac{\lambda}{2}\|\cdot\|^2$ is convex, see, e.g., \cite[Section~5]{Dao2019}. Then we say that $f$ is \emph{strongly convex} if $\lambda >0$, and \emph{weakly convex} if $\lambda <0$. 
More properties of $\lambda$-convex functions can be found in, e.g., \cite[Section~5]{Dao2019}.

\begin{example} \label{ex:lm-cvx}
Recall that for a symmetric matrix $H\in\RR^{d\times d}$, we have $H \succeq \lambda_{\min} I$ where $\lambda_{\min}$ is the minimum eigenvalue of $H$.
\begin{enumerate}
    \item The quadratic function $p\mapsto \frac{1}{2}p^\top Hp$ is $\lambda_{\min}$-convex. In addition, we note that
    $\|H\|=\max\{|\lambda|: \lambda \text{ is an eigenvalue of } H\}$. Thus,
    \begin{align}
        \|H\|\geq \lambda_{\min}.
    \end{align}
    \item If $f_1$ is $\lambda_1$-convex and $f_2$ is $\lambda_2$-convex, then $f_1+f_2$ is $(\lambda_1+\lambda_2)$-convex, see, e.g., \cite[Lemma~5.3]{Dao2019}. Thus, for any $x\in\RR^d$, $g\in\RR^d$, and any convex function $h$, the function
    \begin{align}
    m(p) := c + g^\top p + \frac{1}{2}p^\top Hp + h(x+p),
    \end{align}
    is also $\lambda_{\min}$-convex.
\end{enumerate}
\end{example}

\begin{proposition} 
\label{p:lmconvex}
Let $f\colon \RR^d\to (-\infty, +\infty]$ be proper, lower semicontinuous, and $\lambda $-convex. 
Then, for all $x, y\in \RR^d$ and all $u\in \partial f(x)$,
\begin{align}
    f(y) -f(x)\geq u^\top(y -x) +\frac{\lambda}{2}\|y -x\|^2.
\end{align}
\end{proposition}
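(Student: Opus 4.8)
The plan is to reduce the $\lambda$-convex case to the ordinary convex case via the standard shift $g := f - \frac{\lambda}{2}\|\cdot\|^2$, and then apply the convex subdifferential inequality \eqref{eq:cvxsubdiff}. First I would note that the statement is vacuous unless $x\in\dom f$ (otherwise $\partial f(x)=\varnothing$) and unless $y\in\dom f$ (otherwise $f(y)=+\infty$ and the inequality is trivial), so we may assume $x,y\in\dom f$. Since $f$ is proper, lsc, and $\lambda$-convex, the function $g$ is proper, lsc, and convex (this equivalence is quoted in the text right after the definition of $\lambda$-convexity).

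Next I would identify $\partial f(x)$ in terms of $\partial g(x)$. Writing $f = g + \frac{\lambda}{2}\|\cdot\|^2$ and using that $q(x):=\frac{\lambda}{2}\|x\|^2$ is differentiable everywhere with $\nabla q(x)=\lambda x$, Proposition~\ref{p:chainrule-Mor06} gives $\partial f(x) = \partial(g+q)(x) = \nabla q(x) + \partial g(x) = \lambda x + \partial g(x)$. Hence $u\in\partial f(x)$ if and only if $u-\lambda x\in\partial g(x)$. Because $g$ is convex, its regular subdifferential coincides with the convex subdifferential, so $u-\lambda x\in\partial g(x)$ yields, for the given $y$,
\begin{align}
g(y) \geq g(x) + (u-\lambda x)^\top(y-x).
\end{align}

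Finally I would substitute $g = f - q$ into this inequality and rearrange. This gives
\begin{align}
f(y) - f(x) \geq u^\top(y-x) - \lambda x^\top(y-x) + \tfrac{\lambda}{2}\|y\|^2 - \tfrac{\lambda}{2}\|x\|^2,
\end{align}
and the remaining algebra $\tfrac{\lambda}{2}\|y\|^2 - \tfrac{\lambda}{2}\|x\|^2 - \lambda x^\top(y-x) = \tfrac{\lambda}{2}\|y-x\|^2$ completes the proof. There is no real obstacle here; the only points requiring a little care are the vacuous cases (handled above) and checking that the chain rule of Proposition~\ref{p:chainrule-Mor06} applies — which it does, since $q$ is differentiable on all of $\RR^d$. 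The sign of $\lambda$ plays no role in the argument, so the same proof covers both the strongly convex and weakly convex cases.
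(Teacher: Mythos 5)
Your argument is correct. The paper itself does not give a proof: it simply cites an external result (\cite[Proposition~2.1(ii)]{Dao24}), so your write-up is necessarily a different route, namely a self-contained derivation from ingredients already stated in the paper. Your reduction works exactly as claimed: the shift $g:=f-\tfrac{\lambda}{2}\|\cdot\|^2$ is proper, lsc and convex by the equivalence quoted after the definition of $\lambda$-convexity (continuity of the quadratic preserves lower semicontinuity, and properness is clear); the exact sum rule of Proposition~\ref{p:chainrule-Mor06} applies because the quadratic is differentiable everywhere and $x\in\dom g$, giving $\partial f(x)=\lambda x+\partial g(x)$ regardless of the sign of $\lambda$; the convex subgradient inequality \eqref{eq:cvxsubdiff} then applies to $g$, and the completing-the-square identity $\tfrac{\lambda}{2}\|y\|^2-\tfrac{\lambda}{2}\|x\|^2-\lambda x^\top(y-x)=\tfrac{\lambda}{2}\|y-x\|^2$ finishes the computation. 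You also handle the degenerate cases ($x\notin\dom f$ makes the statement vacuous, $y\notin\dom f$ makes it trivial) correctly. The trade-off is simply one of economy versus self-containedness: the paper outsources the statement to a reference proved in a more general setting, while your argument keeps everything inside the toolkit the paper has already set up (the regular-subdifferential sum rule and the convex subdifferential characterization), at the cost of a few extra lines.
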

\begin{proof}
This follows from \cite[Proposition~2.1(ii)]{Dao24}.    
\end{proof}

\section{Trust-Region Algorithm and Worst-Case Complexity} \label{sec_complexity}

We begin by outlining our main trust-region algorithm for solving \eqref{eq_composite}, in the case of Assumption~\ref{ass_smoothness}.
This algorithm is essentially the same as \cite[Algorithm 1]{Baraldi2023a}, but we will extend the analysis from global convergence to worst-case complexity, following the approach from \cite{Diouane2024}.
Our algorithm is designed to solve problems of the form \eqref{eq_composite} satisfying the following conditions.

\begin{assumption}
\label{ass_smoothness}
~
\begin{enumerate} 
\item 
The smooth component $f$ is differentiable with $\grad f$ being $L_{\grad f}$-Lipschitz continuous.
\item\label{ass_smoothness_h} 
The nonsmooth component $h$ is proper, lower semicontinuous and convex. 
\item 
The whole objective $F$ is bounded below by $F_{\textnormal{low}}$.
\end{enumerate}
\end{assumption}

Although not a formal assumption, our algorithm is designed using the proximity operator of $h$, and so we assume $\prox_{\gamma h}$ is practical to compute.
See \cite[Chapter 6.9]{Beck2017} for examples of such functions.

\subsection{Main Algorithm}

As is standard in trust-region methods \cite{Conn2000}, at each iteration $k$ (with current iterate $x_k$), we construct a local approximation to the full objective $F$ \eqref{eq_composite}.
In our case, we approximate the smooth part $f$ with a quadratic approximation and assume exact access to $h$, yielding an approximation
\begin{align}
    F(x_k+p) \approx m_k(p) := f(x_k) + \grad f(x_k)^\top p + \frac{1}{2}p^\top H_k p + h(x_k+p), \label{eq_model}
\end{align}
where $H_k\in\RR^{d\times d}$ is some symmetric matrix approximating the curvature of $f$, for example via symmetric rank-1 quasi-Newton updating \cite[Chapter 6.2]{nocedal2006numerical}.
We then compute a tentative step $p_k$ by approximately minimizing the model subject to a constraint on the size of the step, i.e., solve the trust-region subproblem
\begin{align}
    p_k \approx \argmin_{p\in\RR^d} m_k(p) \quad \text{s.t.} \quad \|p\| \leq \Delta_k, \label{eq_trs}
\end{align}
for some value $\Delta_k>0$ which is dynamically updated by the algorithm.
Lastly, we decide whether to accept the tentative new iterate $x_k+p_k$ by computing the ratio of the actual decrease in the objective from the step $p_k$ compared to the predicted decrease implied by the model \eqref{eq_rho}.
This full procedure is given in Algorithm~\ref{alg_nonsmooth_tr}.

\begin{algorithm}[htb]
\begin{algorithmic}[1]
\Require Starting point $x_0\in\dom F$ and trust-region radius $\Delta_0>0$. Algorithm parameters: scaling factors $0 < \gammadec < 1 < \gammainc$ and acceptance threshold $\eta\in (0,1)$.
\For{$k=0,1,2,\dots$}
    \State Build a local quadratic model $m_k$ \eqref{eq_model} satisfying Assumption~\ref{ass_unbounded_hess}.
    \State Solve the trust-region subproblem \eqref{eq_trs} to get a step $p_k$ satisfying Assumption~\ref{ass_cauchy_decrease}.
    \State Evaluate $F(x_k+p_k)$ and calculate the ratio 
    \begin{align}
        \rho_k := \frac{F(x_k) - F(x_k + p_k)}{m_k(0) - m_k(p_k)}. \label{eq_rho}
    \end{align}
    \If{$\rho_k \geq \eta$} 
        \State \textit{(Successful iteration)} Set $x_{k+1}=x_k+p_k$ and $\Delta_{k+1}=\gammainc\Delta_k$.
    \Else
        \State \textit{(Unsuccessful iteration)} Set $x_{k+1}=x_k$ and $\Delta_{k+1}=\gammadec\Delta_k$. 
    \EndIf
\EndFor
\end{algorithmic}
\caption{Nonsmooth trust-region method for \eqref{eq_composite}.}
\label{alg_nonsmooth_tr}
\end{algorithm}

As in \cite{Baraldi2023,Baraldi2023a}, we assume that our model Hessian $H_k$ may grow unboundedly.
Specifically, we follow \cite[Model Assumption 4.1]{Diouane2024} and (broadly speaking) assume that $\|H_k\| = O(k^t)$ for some $t\in[0,1]$.
The case $t=0$ corresponds to the more common assumption of uniformly bounded Hessians, but $t=1$ can occur when $H_k$ is generated by symmetric rank-1 quasi-Newton updating \cite[Chapter 8.4.1.2]{Conn2000}.
If $t>1$, the algorithm can be shown to not be globally convergent \cite{Toint1988}.

\begin{assumption} \label{ass_unbounded_hess}
    There exist $\mu>0$ and $t\in[0,1]$ such that, for all $k\in \NN$, 
    \begin{align}
    \max_{j\in\{0,\dots,k\}} \|H_j\| \leq \mu(1+k^t). 
    \end{align}
\end{assumption}

Lastly, we again follow \cite{Baraldi2023a} and require the following sufficient decrease condition from the (approximate) trust-region step \eqref{eq_trs}.

\begin{assumption} \label{ass_cauchy_decrease}
    There exists $\kappa_p>0$ and $\gamma_{\max}>0$ (both independent of $k$) such that for all iterations $k\in \NN$, the step $p_k$ satisfies
    \begin{align}
        m_k(0) - m_k(p_k) \geq \kappa_p \pi_k(\gamma_{\max}) \min\left\{\Delta_k, \frac{\pi_k(\gamma_{\max})}{1+\|H_k\|} \right\},
    \end{align}
    where $\pi_k(\gamma) := \pi(x_k,\gamma)$ is the first-order stationarity measure \eqref{e:pigamma} at $x_k$.
\end{assumption}

A key feature of this work is the new {\em projected proximal gradient subproblem solver} introduced in Section~\ref{sec_subproblem}.
In Corollary~\ref{cor_ppg_multipleN}, we will show that this solver generates the steps $p_k$'s that satisfy Assumption~\ref{ass_cauchy_decrease}.
Alternatively, \cite{Baraldi2023a} provides several suitable subproblem solvers.

The global convergence of Algorithm~\ref{alg_nonsmooth_tr} has already been established in the following result.

\begin{theorem} \label{thm_global_conv_old}
    Suppose Assumptions~\ref{ass_smoothness}, \ref{ass_unbounded_hess} and \ref{ass_cauchy_decrease} hold.
    Then 
\begin{align}
\liminf_{k\to\infty} \pi_k(\gamma_{\max}) = 0.
\end{align}
\end{theorem}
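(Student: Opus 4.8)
The plan is to argue by contradiction, adapting the classical Powell--Toint global convergence scheme for trust-region methods while carefully tracking the (at most polynomial) growth of the model Hessians. Suppose the conclusion fails, so there exist $\epsilon>0$ and $k_0\in\NN$ with $\pi_k(\gamma_{\max})\ge\epsilon$ for all $k\ge k_0$. The first ingredient I would establish is a \emph{step-acceptance estimate}: since $\grad f$ is $L_{\grad f}$-Lipschitz (Assumption~\ref{ass_smoothness}), the model error obeys $|F(x_k+p_k)-m_k(p_k)|=|f(x_k+p_k)-f(x_k)-\grad f(x_k)^\top p_k-\tfrac12 p_k^\top H_k p_k|\le\tfrac12(L_{\grad f}+\|H_k\|)\|p_k\|^2\le\tfrac12(L_{\grad f}+\|H_k\|)\Delta_k^2$, while Assumption~\ref{ass_cauchy_decrease} gives $m_k(0)-m_k(p_k)\ge\kappa_p\pi_k(\gamma_{\max})\Delta_k$ whenever $\Delta_k\le\pi_k(\gamma_{\max})/(1+\|H_k\|)$. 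Dividing, $|\rho_k-1|\le(L_{\grad f}+\|H_k\|)\Delta_k/(2\kappa_p\pi_k(\gamma_{\max}))$ in this regime, so there is a constant $c_0\in(0,1]$ depending only on $L_{\grad f},\kappa_p,\eta$ such that $\Delta_k\le c_0\pi_k(\gamma_{\max})/(1+\|H_k\|)$ forces $\rho_k\ge\eta$, i.e.\ iteration $k$ is successful. I would also note that $F$ is nonincreasing along the iterates (equality on unsuccessful steps; on successful ones $F(x_k)-F(x_{k+1})=\rho_k(m_k(0)-m_k(p_k))\ge0$). Setting $\sigma_k:=1+\max_{j\le k}\|H_j\|$, which is nondecreasing with $1+\|H_k\|\le\sigma_k\le\mu(1+k^t)$ by Assumption~\ref{ass_unbounded_hess}, the acceptance test for $k\ge k_0$ is implied by $\Delta_k\le c_0\epsilon/\sigma_k$.

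Next I would distinguish two cases according to whether the set $\mathcal S$ of successful iterations is finite. If $\mathcal S$ is finite, then past some index $K$ every iteration is unsuccessful, so $x_k\equiv x_K$ (hence $\pi_k(\gamma_{\max})\equiv\pi_K(\gamma_{\max})\ge\epsilon$) and $\Delta_k=\gammadec^{k-K}\Delta_K$. Because $\gammadec<1$ while $\sigma_k$ grows only polynomially, $\Delta_k\sigma_k\to0$, so for $k$ large $\Delta_k\le c_0\epsilon/\sigma_k$ and iteration $k$ is successful, contradicting $k>K$.

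It remains to treat the case $\mathcal S$ infinite. Using the acceptance estimate in an induction (if $\Delta_k$ ever drops to $c_0\epsilon/\sigma_k$ or below the iteration is successful and $\Delta$ is multiplied by $\gammainc>1$, while unsuccessful steps multiply it by $\gammadec<1$) I would prove the lower bound $\Delta_k\ge\min\{\Delta_{k_0},\gammadec c_0\epsilon/\sigma_k\}$ for all $k\ge k_0$; in particular $\log\Delta_k\ge -C-t\log k$ for large $k$ and some constant $C$. Since $\Delta_k=\Delta_{k_0}\gammainc^{|\mathcal S_k|}\gammadec^{(k-k_0)-|\mathcal S_k|}$, where $\mathcal S_k$ is the set of successful iterations in $\{k_0,\dots,k-1\}$, taking logarithms and inserting this lower bound yields $|\mathcal S_k|\ge c_2 k$ for some $c_2>0$ and all large $k$, so the $i$-th successful iteration $k_i$ satisfies $k_i\le C_3 i$ for some $C_3$. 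Combining the per-step decrease with the $\Delta_k$ lower bound then gives
\[
F(x_{k_0})-\Flow\ \ge\ \sum_{k\in\mathcal S}\bigl(F(x_k)-F(x_{k+1})\bigr)\ \ge\ \eta\kappa_p\epsilon\sum_{k\in\mathcal S}\min\Bigl\{\Delta_k,\tfrac{\epsilon}{\sigma_k}\Bigr\}\ \ge\ c_4\sum_{i\ge i_0}\frac{1}{1+(C_3 i)^t},
\]
for suitable constants $c_4>0$ and $i_0$, and this last series diverges because $t\le1$ --- contradicting that the left-hand side is finite ($F$ nonincreasing and bounded below by $\Flow$). This contradiction proves $\liminf_{k\to\infty}\pi_k(\gamma_{\max})=0$.

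I expect the main obstacle to be precisely the unbounded Hessian growth. In the classical bounded-Hessian case $t=0$ each successful iteration decreases $F$ by at least a fixed positive constant, so $\mathcal S$ cannot be infinite and the argument is immediate; here the guaranteed decrease on a successful iteration degrades like $1/\sigma_k=\Theta(k^{-t})$, so one must additionally prove that successful iterations occur with positive density (the counting argument on $\log\Delta_k$, fed by the lower bound on $\Delta_k$) in order for the resulting series to still diverge. This is exactly where the restriction $t\le1$ enters, matching the known failure of the method for $t>1$.
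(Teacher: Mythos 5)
Your proof is correct, but it differs from the paper in an important structural sense: the paper does not prove Theorem~\ref{thm_global_conv_old} at all --- it simply cites \cite[Theorem 1]{Baraldi2023a} (itself an extension of \cite[Theorem 3]{Baraldi2023}), which establishes global convergence under the weaker summability condition $\sum_{k}\bigl(1+\max_{j\le k}\|H_j\|\bigr)^{-1}=\infty$, a condition implied by Assumption~\ref{ass_unbounded_hess} with $t\in[0,1]$. What you have written is a self-contained Powell--Toint style contradiction argument specialised to the polynomial growth rate: the acceptance estimate, the finite/infinite split on $\mathcal S$, the lower bound $\Delta_k\ge\min\{\Delta_{k_0},\gammadec c_0\epsilon/\sigma_k\}$ (the induction is valid precisely because you use the running maximum $\sigma_k$, which is nondecreasing), the positive-density counting of successes via $\log\Delta_k$ and the exact identity $\Delta_k=\Delta_{k_0}\gammainc^{|\mathcal S_k|}\gammadec^{(k-k_0)-|\mathcal S_k|}$, and finally divergence of $\sum_i (1+(C_3 i)^t)^{-1}$ for $t\le 1$. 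All of these steps check out; the only slips are cosmetic: $\sigma_k\le 1+\mu(1+k^t)$ rather than $\mu(1+k^t)$, and in your closing remark the ``fixed positive decrease per success'' in the bounded case still requires the $\Delta_k$ lower bound, so $\mathcal S$ infinite is not ruled out quite ``immediately''. What each route buys: the citation route is shorter and covers the more general summability hypothesis (even infinite-dimensional settings), whereas your direct argument is transparent about where $t\le 1$ enters and, notably, your success-density counting is essentially the same mechanism that the paper later quantifies in its complexity analysis (the sets $\mathcal V_k,\mathcal W_k$ and Lemmas~\ref{lem_sum_split}--\ref{lem_sum_bound}), so your proof foreshadows Theorem~\ref{thm_wcc_final} rather than merely reproving the cited result.
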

\begin{proof}
    This is \cite[Theorem 1]{Baraldi2023a}, which is a small extension of \cite[Theorem 3]{Baraldi2023}.
\end{proof}

\subsection{Worst-Case Complexity}

We now extend the analysis of Algorithm~\ref{alg_nonsmooth_tr} beyond the global convergence in Theorem~\ref{thm_global_conv_old} and consider its worst-case complexity.
That is, we wish to bound the number of iterations it takes to drive the stationarity measure $\pi_k(\gamma_{\max})$ below some desired level $\epsilon>0$.
The analysis in this section follows \cite{Diouane2024}, which considers only the smooth case $h\equiv 0$.

First, we bound the error in our model \eqref{eq_model}. Denote
\begin{align}
c_1 := \frac{1}{2}\max\{L_{\grad f}, 1\}. \label{e:c1}
\end{align}
\begin{lemma} \label{lem_fully_linear}
    Suppose Assumption~\ref{ass_smoothness} holds. Then, for all $\in \NN$, the trust-region step $p_k$ satisfies
    \begin{align}
        |F(x_k+p_k) - m_k(p_k)| \leq c_1 (1 + \|H_k\|) \Delta_k^2,
    \end{align}
    where $c_1$ is given by \eqref{e:c1}. 
\end{lemma}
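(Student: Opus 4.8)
The plan is to split the difference $F(x_k+p_k) - m_k(p_k)$ into a smooth part and a nonsmooth part, and note that the nonsmooth part cancels exactly. Writing $F(x_k+p_k) = f(x_k+p_k) + h(x_k+p_k)$ and $m_k(p_k) = f(x_k) + \grad f(x_k)^\top p_k + \frac{1}{2}p_k^\top H_k p_k + h(x_k+p_k)$, the $h(x_k+p_k)$ terms cancel, leaving
\begin{align}
F(x_k+p_k) - m_k(p_k) = f(x_k+p_k) - f(x_k) - \grad f(x_k)^\top p_k - \tfrac{1}{2}p_k^\top H_k p_k. \nonumber
\end{align}
So the nonsmoothness is irrelevant here; this is precisely the standard fully-linear-type model error bound from the smooth theory.

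Next I would bound the two remaining pieces separately using the triangle inequality. For the first-order Taylor remainder, the $L_{\grad f}$-Lipschitz continuity of $\grad f$ (Assumption~\ref{ass_smoothness}(i)) gives the standard descent-lemma estimate $|f(x_k+p_k) - f(x_k) - \grad f(x_k)^\top p_k| \leq \frac{L_{\grad f}}{2}\|p_k\|^2$. For the quadratic term, $|\frac{1}{2}p_k^\top H_k p_k| \leq \frac{1}{2}\|H_k\|\|p_k\|^2$ by definition of the matrix 2-norm. Adding these and using the trust-region constraint $\|p_k\| \leq \Delta_k$ yields
\begin{align}
|F(x_k+p_k) - m_k(p_k)| \leq \tfrac{1}{2}L_{\grad f}\Delta_k^2 + \tfrac{1}{2}\|H_k\|\Delta_k^2. \nonumber
\end{align}

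Finally I would package this into the stated form. Since $\frac{1}{2}L_{\grad f} \leq c_1$ and $\frac{1}{2} \leq c_1$ by the definition \eqref{e:c1} of $c_1 = \frac{1}{2}\max\{L_{\grad f},1\}$, the right-hand side is bounded by $c_1\Delta_k^2 + c_1\|H_k\|\Delta_k^2 = c_1(1+\|H_k\|)\Delta_k^2$, which is exactly the claim.

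I do not anticipate any real obstacle: every ingredient (cancellation of $h$, the descent lemma, the operator-norm bound, the trust-region constraint, the definition of $c_1$) is elementary or already available in the excerpt. The only thing to be careful about is that the step $p_k$ is merely an approximate solution of \eqref{eq_trs}, but all we actually use about it is feasibility $\|p_k\|\leq\Delta_k$, which holds for any admissible trust-region step, so the argument goes through verbatim. (One could even state the bound for arbitrary $p$ with $\|p\|\leq\Delta_k$.)
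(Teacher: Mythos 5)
Your proposal is correct and follows essentially the same argument as the paper: the $h(x_k+p_k)$ terms cancel, the Lipschitz/descent-lemma bound handles the Taylor remainder, the operator-norm bound handles the quadratic term, and $\|p_k\|\leq\Delta_k$ together with the definition of $c_1$ gives the stated estimate. No differences worth noting.
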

\begin{proof}
    Since $\grad f$ is $L_{\grad f}$-Lipschitz continuous, we have the standard bound (e.g.~\cite[Appendix A]{nocedal2006numerical})
    \begin{align}
        |f(x_k+p_k) - f(x_k) - \grad f(x_k)^\top p_k| \leq \frac{1}{2} L_{\grad f}\|p_k\|^2.
    \end{align}
    Then, by the definitions of $F$ and $m_k$,
    \begin{align}
        |F(x_k+p_k) - m_k(p_k)| &= |f(x_k + p_k) + h(x_k+p_k) \nonumber \\
        & \qquad - f(x_k) - \grad f(x_k)^\top p_k - \frac{1}{2}p_k^\top H_k p_k - h(x_k+p_k)|, \\
        &\leq \frac{1}{2}L_{\grad f}\|p_k\|^2 + \frac{1}{2}\|H_k\| \cdot \|p_k\|^2, \\
        &\leq c_1 \|p_k\|^2 + c_1 \|H_k\| \cdot \|p_k\|^2.
    \end{align}
    The result then follows from $\|p_k\| \leq \Delta_k$.
\end{proof}

An important quantity to track in our analysis is
\begin{align}
    a_k := \Delta_k \cdot \frac{1 + \max_{j\leq k} \|H_j\|}{\min_{j\leq k} \pi_j(\gamma_{\max})}. \label{eq_ak_defn}
\end{align}
We have the following.

\begin{lemma} \label{lem_amin}
    Suppose Assumptions~\ref{ass_smoothness} and \ref{ass_cauchy_decrease} hold. 
    Then, for all $k\in \NN$, 
    \begin{align}
        a_k \geq a_{\min} := \min\left\{a_0, \gammadec, \frac{\gammadec \kappa_p (1-\eta)}{c_1}\right\},
        \label{e:a_min}
    \end{align}
where $a_k$ and $c_1$ are from \eqref{eq_ak_defn} and \eqref{e:c1}, respectively.
\end{lemma}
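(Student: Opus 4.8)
The plan is to show $a_k \geq a_{\min}$ by induction on $k$. The base case $k=0$ is immediate since $a_0 \geq a_{\min}$ by the definition of $a_{\min}$. For the inductive step, suppose $a_k \geq a_{\min}$; I want to deduce $a_{k+1} \geq a_{\min}$. I would split into two cases according to whether iteration $k$ is successful or unsuccessful, since these determine how $\Delta_{k+1}$ relates to $\Delta_k$.

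First I would handle the unsuccessful case, which should be the easy one. When iteration $k$ is unsuccessful, $x_{k+1} = x_k$ and $\Delta_{k+1} = \gammadec \Delta_k$. The numerator $1 + \max_{j\leq k}\|H_j\|$ can only weakly increase when passing to $k+1$ (since we take a max over a larger index set), and the denominator $\min_{j\leq k}\pi_j(\gamma_{\max})$ can only weakly decrease; however, since $x_{k+1}=x_k$ we have $\pi_{k+1}(\gamma_{\max}) = \pi_k(\gamma_{\max})$, so the denominator is in fact unchanged. Actually the cleanest argument: $a_{k+1} = \gammadec \Delta_k \cdot \frac{1+\max_{j\leq k+1}\|H_j\|}{\min_{j\leq k+1}\pi_j(\gamma_{\max})} \geq \gammadec \Delta_k \cdot \frac{1+\max_{j\leq k}\|H_j\|}{\min_{j\leq k}\pi_j(\gamma_{\max})} = \gammadec a_k \geq \gammadec a_{\min}$. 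Since $a_{\min} \leq \gammadec$ is not quite what I want — rather, I should note $\gammadec a_{\min} \geq a_{\min}$ fails because $\gammadec < 1$. So the unsuccessful case genuinely needs more care: I would instead argue directly that $a_k$ cannot have been too small at step $k$, i.e., if $a_k \geq a_{\min}$ and the iteration is unsuccessful, I need $\gammadec a_k \geq a_{\min}$, which requires $a_k \geq a_{\min}/\gammadec$. This is not given by the induction hypothesis alone, so the real structure must be: an unsuccessful step can only have occurred because $a_k$ (equivalently $\Delta_k$) was already large enough relative to the model error, via Lemma~\ref{lem_fully_linear} and the definition of $\rho_k$.

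The heart of the proof, and the main obstacle, is therefore the following key estimate, applied whenever iteration $k$ is unsuccessful (so $\rho_k < \eta$): combining the Cauchy decrease bound (Assumption~\ref{ass_cauchy_decrease}) with the model error bound (Lemma~\ref{lem_fully_linear}), I would show that if $\Delta_k \leq \frac{\pi_k(\gamma_{\max})}{1+\|H_k\|}$ then necessarily
\begin{align}
1 - \rho_k = \frac{m_k(p_k) - F(x_k+p_k)}{m_k(0)-m_k(p_k)} \leq \frac{c_1(1+\|H_k\|)\Delta_k^2}{\kappa_p \pi_k(\gamma_{\max})\Delta_k} = \frac{c_1(1+\|H_k\|)\Delta_k}{\kappa_p \pi_k(\gamma_{\max})},
\end{align}
so that an unsuccessful step ($1-\rho_k > 1-\eta$) forces $\Delta_k > \frac{\kappa_p(1-\eta)\pi_k(\gamma_{\max})}{c_1(1+\|H_k\|)}$, i.e., $\Delta_k \cdot \frac{1+\|H_k\|}{\pi_k(\gamma_{\max})} > \frac{\kappa_p(1-\eta)}{c_1}$. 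Since $\|H_k\| \leq \max_{j\leq k}\|H_j\|$ and $\pi_k(\gamma_{\max}) \geq \min_{j\leq k}\pi_j(\gamma_{\max})$, this gives $a_k > \frac{\kappa_p(1-\eta)}{c_1}$, hence $a_{k+1} = \gammadec a_k > \frac{\gammadec \kappa_p(1-\eta)}{c_1} \geq a_{\min}$. In the remaining subcase where $\Delta_k > \frac{\pi_k(\gamma_{\max})}{1+\|H_k\|}$, we directly get $a_k > 1 \geq$ the relevant terms, so $a_{k+1} = \gammadec a_k > \gammadec \geq a_{\min}$.

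Finally, for the successful case: $x_{k+1} = x_k + p_k$ and $\Delta_{k+1} = \gammainc \Delta_k > \Delta_k$, so $\Delta$ increases. The subtlety is that $\pi_{k+1}(\gamma_{\max})$ may differ from $\pi_k(\gamma_{\max})$ and could be smaller, shrinking the denominator of $a_{k+1}$. However, $\min_{j\leq k+1}\pi_j(\gamma_{\max}) = \min\{\min_{j\leq k}\pi_j(\gamma_{\max}), \pi_{k+1}(\gamma_{\max})\}$, and I would need: either the min is unchanged — in which case $a_{k+1} \geq \frac{\gammainc \Delta_k (1+\max_{j\leq k}\|H_j\|)}{\min_{j\leq k}\pi_j(\gamma_{\max})} = \gammainc a_k > a_k \geq a_{\min}$ — or $\pi_{k+1}(\gamma_{\max})$ is the new (strictly smaller) minimum. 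In the latter case I expect one must use a lower bound on $\pi_{k+1}$ coming from $\pi_k$, the step length $\|p_k\|\leq\Delta_k$, continuity/Lipschitz-type properties of $\pi(\cdot,\gamma_{\max})$ (nonexpansiveness of the prox, Proposition~\ref{p:prox}, together with Lipschitzness of $\grad f$), to show the drop in $\pi$ is controlled by $\Delta_k$, so that the ratio does not fall below $a_{\min}$; this is the genuinely delicate bookkeeping and is where I would expect to spend the most effort, tracking constants carefully so that the three candidates $a_0$, $\gammadec$, and $\frac{\gammadec\kappa_p(1-\eta)}{c_1}$ in \eqref{e:a_min} suffice.
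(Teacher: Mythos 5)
Your handling of unsuccessful iterations is sound, and it is in substance the paper's own central computation: combining Lemma~\ref{lem_fully_linear} with Assumption~\ref{ass_cauchy_decrease} to show that $\rho_k<\eta$ forces $a_k>\min\{1,\kappa_p(1-\eta)/c_1\}$, whence $a_{k+1}\geq\gammadec a_k\geq a_{\min}$. (The paper organizes the same estimate contrapositively: if $a_k<\min\{1,\kappa_p(1-\eta)/c_1\}$ then $|\rho_k-1|\leq (c_1/\kappa_p)a_k<1-\eta$, so the iteration must be successful; also note your numerator should be $F(x_k+p_k)-m_k(p_k)$ rather than its negative, though this is harmless since you bound it by the absolute value from Lemma~\ref{lem_fully_linear}.)

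The genuine gap is in your successful case, and it comes from misreading the definition \eqref{eq_ak_defn}: the denominator of $a_{k+1}$ is the \emph{running minimum} $\min_{j\leq k+1}\pi_j(\gamma_{\max})$, so if $\pi_{k+1}(\gamma_{\max})$ becomes a new, strictly smaller minimum, the denominator shrinks and $a_{k+1}$ becomes \emph{larger}, not smaller. There is nothing to control in that subcase. Indeed, $a_k/\Delta_k$ is non-decreasing in $k$ (exactly the monotonicity you invoke in the unsuccessful case), and on a successful iteration $\Delta_{k+1}=\gammainc\Delta_k\geq\Delta_k$, so $a_{k+1}=(a_{k+1}/\Delta_{k+1})\,\Delta_{k+1}\geq(a_k/\Delta_k)\,\Delta_k=a_k\geq a_{\min}$ by the induction hypothesis. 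The program you sketch for the ``new minimum'' subcase---lower-bounding $\pi_{k+1}$ in terms of $\pi_k$, $\Delta_k$, nonexpansiveness of the prox and Lipschitzness of $\grad f$---is unnecessary, and no such quantitative bound is used anywhere in the paper's proof; as written your proof is therefore incomplete at precisely the step you flag as delicate. With the one-line replacement above the argument closes and becomes essentially equivalent to the paper's proof, which splits on the size of $a_k$ rather than on success versus failure but uses exactly the same two ingredients.
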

\begin{proof}
    We first note that $a_k/\Delta_k$ is non-decreasing by definition, and so
    \begin{align}
        \frac{a_{k+1}}{\Delta_{k+1}} \geq \frac{a_k}{\Delta_k}. \label{eq_amin_tmp1}
    \end{align}
    We will show the result by induction. The case $k=0$ is trivial by the definition of $a_{\min}$, so now suppose $a_k \geq a_{\min}$ for some $k$.

    First, if $a_k \geq \min\{1, \kappa_p (1-\eta)/c_1\}$, then by the updating mechanism for $\Delta_k$ (see~Algorithm~\ref{alg_nonsmooth_tr}) and \eqref{eq_amin_tmp1} we have
    \begin{align}
        a_{k+1} = \frac{a_{k+1}}{\Delta_{k+1}} \cdot \Delta_{k+1} \geq \frac{a_k}{\Delta_k} \cdot \gammadec \Delta_k = \gammadec a_k \geq \gammadec \min\left\{1, \frac{\kappa_p (1-\eta)}{c_1}\right\} \geq a_{\min},
    \end{align}
    and we are done.

    Now suppose that $a_k < \min\{1, \kappa_p (1-\eta)/c_1\}$.
    Then since $m_k(0) = F(x_k)$ by the definition of $m_k$, we have
    \begin{align}
        |\rho_k-1| = \frac{|F(x_k+p_k) - m_k(p_k)|}{m_k(0) - m_k(p_k)} \leq \frac{c_1 (1 + \|H_k\|) \Delta_k^2}{\kappa_p \pi_k(\gamma_{\max}) \min\left\{\Delta_k, \frac{\pi_k(\gamma_{\max})}{1+\|H_k\|}\right\}},
    \end{align}
    using Lemma~\ref{lem_fully_linear} and Assumption~\ref{ass_cauchy_decrease}.
    Hence
    \begin{align}
        |\rho_k-1| &\leq \frac{c_1 \left(1 + \max_{j\leq k} \|H_j\|\right) \Delta_k^2}{\kappa_p \left[\min_{j\leq k} \pi_j(\gamma_{\max})\right] \min\left\{\Delta_k, \frac{\min_{j\leq k} \pi_j(\gamma_{\max})}{1 + \max_{j\leq k} \|H_j\|}\right\}}, \\
        &= \frac{c_1 a_k \Delta_k}{\kappa_p \min\left\{\Delta_k, \frac{\Delta_k}{a_k}\right\}}, \\
        &= \frac{c_1}{\kappa_p} a_k < 1-\eta,
    \end{align}
    using the assumption $a_k < 1$ to get $\min\{\Delta_k, \Delta_k/a_k\} = \Delta_k$ for the last equality.
    Thus we have $\rho_k \geq \eta$, so iteration $k$ is successful.
    By the trust-region updating mechanism, $\Delta_{k+1} \geq \Delta_k$, and so using \eqref{eq_amin_tmp1} we get
    \begin{align}
        a_{k+1} = \frac{a_{k+1}}{\Delta_{k+1}} \cdot \Delta_{k+1} \geq \frac{a_k}{\Delta_k} \cdot \Delta_k = a_k \geq a_{\min},
    \end{align}
    and we are done.
\end{proof}

\begin{lemma} \label{lem_suff_decrease}
    Suppose Assumptions~\ref{ass_smoothness} and \ref{ass_cauchy_decrease} hold. 
    Then, for all $\in \NN$,
    \begin{align}
        m_k(0) - m_k(p_k) \geq \kappa_p a_{\min} \cdot \frac{\min_{j\leq k} \pi_j(\gamma_{\max})^2}{1 + \max_{j\leq k} \|H_j\|},
    \end{align}
where $a_{\min}$ is defined in \eqref{e:a_min}.
\end{lemma}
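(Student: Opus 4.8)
The plan is to derive the claim directly from the Cauchy-type sufficient decrease in Assumption~\ref{ass_cauchy_decrease} by replacing each iteration-$k$ quantity appearing there with its uniform surrogate over $j\leq k$, and then to use the lower bound on $a_k$ established in Lemma~\ref{lem_amin}. To keep the bookkeeping light, I would abbreviate $\bar\pi_k := \min_{j\leq k}\pi_j(\gamma_{\max})$ and $\bar H_k := \max_{j\leq k}\|H_j\|$, so that $a_k = \Delta_k(1+\bar H_k)/\bar\pi_k$ by \eqref{eq_ak_defn}, and Lemma~\ref{lem_amin} reads $\Delta_k \geq a_{\min}\bar\pi_k/(1+\bar H_k)$.

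The key steps, in order, are as follows. First, from $\pi_k(\gamma_{\max})\geq\bar\pi_k$ and $\|H_k\|\leq\bar H_k$ I get $\pi_k(\gamma_{\max})/(1+\|H_k\|)\geq\bar\pi_k/(1+\bar H_k)$. Second, since $a_{\min}\leq\gammadec<1$ by its very definition \eqref{e:a_min}, the bound from Lemma~\ref{lem_amin} gives $\Delta_k\geq a_{\min}\bar\pi_k/(1+\bar H_k)$ with $a_{\min}\bar\pi_k/(1+\bar H_k)\leq\bar\pi_k/(1+\bar H_k)$, so both arguments of the minimum in Assumption~\ref{ass_cauchy_decrease} are at least $a_{\min}\bar\pi_k/(1+\bar H_k)$, whence $\min\{\Delta_k,\pi_k(\gamma_{\max})/(1+\|H_k\|)\}\geq a_{\min}\bar\pi_k/(1+\bar H_k)$. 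Third, substituting this and the bound $\pi_k(\gamma_{\max})\geq\bar\pi_k$ into Assumption~\ref{ass_cauchy_decrease} yields $m_k(0)-m_k(p_k)\geq\kappa_p\bar\pi_k\cdot a_{\min}\bar\pi_k/(1+\bar H_k)=\kappa_p a_{\min}\bar\pi_k^2/(1+\bar H_k)$, which is exactly the asserted inequality.

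I do not expect a genuine obstacle here: the argument is purely algebraic once Lemmas~\ref{lem_fully_linear} and \ref{lem_amin} are in hand. The only points requiring a little care are getting the directions of the inequalities right when passing from $\pi_k(\gamma_{\max})$ and $\|H_k\|$ to their extremal counterparts over $j\leq k$, and explicitly recording that $a_{\min}<1$ so that, after the substitution of the $\Delta_k$ lower bound, the two branches of the minimum collapse to the single value $a_{\min}\bar\pi_k/(1+\bar H_k)$.
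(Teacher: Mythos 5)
Your proposal is correct and follows essentially the same route as the paper's proof: both start from Assumption~\ref{ass_cauchy_decrease}, pass to the extremal quantities $\min_{j\leq k}\pi_j(\gamma_{\max})$ and $\max_{j\leq k}\|H_j\|$ via the definition \eqref{eq_ak_defn} of $a_k$, and invoke Lemma~\ref{lem_amin} together with $a_{\min}\leq\gammadec<1$ (the paper factors this as $\min\{a_k,1\}\geq a_{\min}$, you bound both branches of the minimum directly, which is the same argument). The only cosmetic slip is citing Lemma~\ref{lem_fully_linear}, which is not needed here.
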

\begin{proof}
    From Assumption~\ref{ass_cauchy_decrease} and the definition \eqref{eq_ak_defn} of $a_k$, we have
    \begin{align}
        m_k(0) - m_k(p_k) &\geq \kappa_p \pi_k(\gamma_{\max}) \min\left\{\Delta_k, \frac{\pi_k(\gamma_{\max})}{1 + \|H_k\|}\right\}, \\
        &\geq \kappa_p \left[\min_{j\leq k} \pi_j(\gamma_{\max})\right] \min\left\{\frac{a_k \cdot \min_{j\leq k} \pi_j(\gamma_{\max})}{1 + \max_{j\leq k} \|H_j\|}, \frac{\min_{j\leq k} \pi_j(\gamma_{\max})}{1 + \max_{j\leq k} \|H_j\|}\right\}, \\
        &= \kappa_p \cdot \min\{a_k,1\} \cdot \frac{\min_{j\leq k} \pi_j(\gamma_{\max})^2}{1 + \max_{j\leq k} \|H_j\|}.
    \end{align}
    The result then follows from $a_k \geq a_{\min}$ (Lemma~\ref{lem_amin}) and $1 > \gammadec \geq a_{\min}$.
\end{proof}

We also need the following two technical lemmas:
The first one is relevant to Assumption~\ref{ass_unbounded_hess}, and the second one will be used to bound the time before the first successful iteration.

\begin{lemma} \label{lem_technical_sum}
    Suppose $\mu>0$ and $t>0$.
    Then, for all $k_1 < k_2$,
    \begin{align}
        \sum_{k=k_1}^{k_2} \frac{1}{1 + \mu(1+(k+1)^t)} \geq \frac{(k_1+1)^t}{1+\mu(1+(k_1+1)^t)} \int_{k_1+1}^{k_2+2} \frac{1}{s^t} ds.
    \end{align}
\end{lemma}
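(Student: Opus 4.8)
The plan is to compare the sum with an integral, exploiting monotonicity of the summand. Define $\phi(s) := \dfrac{1}{1+\mu(1+s^t)}$, which is positive and strictly decreasing on $[0,\infty)$ for $\mu, t > 0$ (since $s \mapsto s^t$ is increasing). Because $\phi$ is decreasing, for each integer $k$ with $k_1 \le k \le k_2$ we have $\phi(k+1) \ge \int_{k+1}^{k+2} \phi(s)\,ds$, and summing over $k$ from $k_1$ to $k_2$ gives
\begin{align}
\sum_{k=k_1}^{k_2} \frac{1}{1+\mu(1+(k+1)^t)} \;\ge\; \int_{k_1+1}^{k_2+2} \phi(s)\,ds.
\end{align}
So the first step reduces everything to lower-bounding $\int_{k_1+1}^{k_2+2} \phi(s)\,ds$ by the claimed expression.

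Next I would extract a favourable factor from $\phi(s)$ on the integration range. For $s \ge k_1+1$ we have $s^t \ge (k_1+1)^t$, hence $1 + \mu(1+s^t) \le \dfrac{s^t}{(k_1+1)^t}\bigl(1 + \mu(1+(k_1+1)^t)\bigr)$; this is the key inequality, obtained by checking that $1 + \mu(1+s^t) \le \dfrac{s^t}{(k_1+1)^t} + \dfrac{s^t}{(k_1+1)^t}\mu(1+(k_1+1)^t)$, which holds since $1 \le s^t/(k_1+1)^t$ and $\mu(1+s^t) \le \mu s^t + \mu s^t/(k_1+1)^t \cdot (k_1+1)^t$... more cleanly: $1+\mu(1+s^t) = (1+\mu) + \mu s^t$ and the claimed upper bound is $\frac{s^t}{(k_1+1)^t}(1+\mu) + \mu s^t$, and $s^t \ge (k_1+1)^t$ gives $\frac{s^t}{(k_1+1)^t}(1+\mu) \ge 1+\mu$. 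Taking reciprocals,
\begin{align}
\phi(s) = \frac{1}{1+\mu(1+s^t)} \;\ge\; \frac{(k_1+1)^t}{1+\mu(1+(k_1+1)^t)} \cdot \frac{1}{s^t},
\end{align}
and integrating this over $[k_1+1, k_2+2]$ yields exactly the right-hand side of the claimed inequality, completing the proof.

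The only mild subtlety is the algebraic manipulation that pulls $1/s^t$ out of $\phi(s)$ with the correct constant; writing $1+\mu(1+s^t) = (1+\mu) + \mu s^t$ and bounding each term separately by its $(k_1+1)$-counterpart scaled by $s^t/(k_1+1)^t$ makes it transparent, so I do not expect any real obstacle — the argument is a routine integral-comparison estimate. One should just be slightly careful that the statement requires $k_1 < k_2$ only so that the integral has positive length and the sum is nonempty; the monotonicity step and the pointwise bound are valid regardless.
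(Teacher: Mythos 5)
Your proof is correct: the monotone-comparison step $\phi(k+1)\geq\int_{k+1}^{k+2}\phi(s)\,ds$ with $\phi(s)=\bigl(1+\mu(1+s^t)\bigr)^{-1}$ tiles exactly the interval $[k_1+1,k_2+2]$, and your pointwise bound, rewritten as $1+\mu(1+s^t)=(1+\mu)+\mu s^t\leq \frac{s^t}{(k_1+1)^t}(1+\mu)+\mu s^t$ for $s\geq k_1+1$, is valid and yields the stated constant after taking reciprocals and integrating. Note, however, that the paper does not prove this lemma at all: its ``proof'' is a citation to Lemma~5 of the smooth-case complexity paper of Diouane et al.\ (reference \cite{Diouane2024}), so there is no in-paper argument to compare against. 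What your write-up buys is a short, self-contained, elementary derivation (essentially the standard decreasing-function integral test plus a factorization of $1/s^t$ out of the summand), which is almost certainly the same flavour of argument as in the cited source; it would serve as a complete replacement for the external reference. One cosmetic remark: the hypothesis $k_1<k_2$ is not actually needed for your argument (it works verbatim for $k_1=k_2$, and the restriction in the statement merely reflects how the lemma is invoked), and you correctly flag this; also implicitly you use $k_1\geq 0$ so that $(k_1+1)^t>0$, which holds since $k_1$ is an iteration index.
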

\begin{proof}
    This is \cite[Lemma 5]{Diouane2024}.
\end{proof}

\begin{lemma} \label{lem_technical}
    Suppose $a_1,a_2>0$. Then there exists $k^*$ such that $k > k^*$ implies $k > a_1 + a_2 \sqrt{k}$, and moreover we have $k^*=\Theta(a_1)$ as $a_1\to\infty$.
\end{lemma}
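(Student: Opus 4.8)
The plan is to solve the inequality $k > a_1 + a_2\sqrt{k}$ explicitly by treating it as a quadratic in $\sqrt{k}$. Writing $s = \sqrt{k} > 0$, the inequality becomes $s^2 - a_2 s - a_1 > 0$, which (for $s > 0$) holds exactly when $s > \frac{a_2 + \sqrt{a_2^2 + 4a_1}}{2}$. Squaring, this is equivalent to $k > \left(\frac{a_2 + \sqrt{a_2^2+4a_1}}{2}\right)^2$. So I would simply set
\begin{align}
k^* := \left(\frac{a_2 + \sqrt{a_2^2 + 4a_1}}{2}\right)^2,
\end{align}
and note that $k > k^*$ implies $\sqrt{k} > \frac{a_2 + \sqrt{a_2^2+4a_1}}{2}$ (using monotonicity of the square root), which rearranges to $k - a_2\sqrt{k} - a_1 > 0$, i.e. $k > a_1 + a_2\sqrt{k}$, as required.

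For the asymptotic claim $k^* = \Theta(a_1)$ as $a_1 \to \infty$ (with $a_2$ fixed), I would expand: as $a_1\to\infty$ we have $\sqrt{a_2^2+4a_1} = 2\sqrt{a_1}\sqrt{1 + a_2^2/(4a_1)} = 2\sqrt{a_1} + O(1/\sqrt{a_1})$, so $\frac{a_2 + \sqrt{a_2^2+4a_1}}{2} = \sqrt{a_1} + \frac{a_2}{2} + o(1)$, and squaring gives $k^* = a_1 + a_2\sqrt{a_1} + O(1) = a_1(1 + o(1))$. Hence $k^*/a_1 \to 1$, which certainly gives $k^* = \Theta(a_1)$. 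A cleaner route that avoids expansions: since $a_2^2 + 4a_1 \le (a_2 + 2\sqrt{a_1})^2$ and also $a_2^2 + 4a_1 \le a_2^2 + 4a_1$ trivially, one gets the two-sided bound $a_1 \le k^* \le a_1 + a_2\sqrt{a_1} + a_2^2$ by elementary estimates on $\left(\frac{a_2+\sqrt{a_2^2+4a_1}}{2}\right)^2$, and for $a_1 \ge 1$ the upper bound is at most $(1 + 2a_2)a_1$ wait---more carefully $a_2\sqrt{a_1} \le a_2 a_1$ and $a_2^2 \le a_2^2 a_1$, so $k^* \le (1 + a_2 + a_2^2)a_1$; combined with $k^* \ge a_1$ this yields $\Theta(a_1)$ directly.

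I do not anticipate any real obstacle here; the statement is an elementary exercise in solving a quadratic inequality. The only mild subtlety is being careful that the equivalences involving squaring and the square root are valid on the relevant domain (both sides positive, which holds since we restrict to $s > 0$ and the larger root is positive), and stating the $\Theta$ claim with $a_2$ held fixed as $a_1\to\infty$ (which is how it is used in the subsequent complexity argument, bounding the number of iterations before the first successful step). If one wanted an even more low-tech $k^*$, one could instead take $k^* := 2\max\{a_1, a_2^2\} + 2$ or similar and verify $k > k^*$ implies $a_1 \le k/2$ and $a_2\sqrt{k} \le k/2$ (the latter since $\sqrt{k} \ge 2a_2$ when $k \ge 4a_2^2$), giving $a_1 + a_2\sqrt{k} \le k$; but then the $\Theta(a_1)$ conclusion requires $a_2$ fixed and $a_1$ large so that $\max\{a_1, a_2^2\} = a_1$, which is fine. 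I would present the explicit-root version as the main proof since it is tight and transparent.
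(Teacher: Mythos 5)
Your proof is correct and follows essentially the same route as the paper: write the inequality as a quadratic in $\sqrt{k}$, take $k^*$ to be the square of the larger root $\frac{a_2+\sqrt{a_2^2+4a_1}}{2}$, and read off $k^*=\Theta(a_1)$. You simply fill in the asymptotic justification of the $\Theta(a_1)$ claim more explicitly than the paper does, which is fine.
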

\begin{proof}
    The inequality $k\geq a_1 + a_2\sqrt{k}$ holds provided that $\sqrt{k}$ is at least the larger root of $t^2-a_2 t - a_1$, i.e, \begin{align}
    k> \left(\frac{a_2+ \sqrt{a_2^2+4a_1}}{2}\right)^2
    =\frac{2a_1+a_2^2+a_2\sqrt{a_2^2+4a_1}}{2},
    \end{align}
    which gives $k^*=\Theta(a_1)$.
\end{proof}

Our first main result bounds the total time until the first successful iteration.

\begin{lemma} \label{lem_first_success}
    Suppose Assumptions~\ref{ass_smoothness}, \ref{ass_unbounded_hess} and \ref{ass_cauchy_decrease} hold.
    Then if $\pi_0(\gamma_{\max}) \geq \epsilon > 0$, then there is at least one successful iteration, and if $k_0$ is the first successful iteration, then $k_0 = O(\log(1/\epsilon))$ as $\epsilon\to 0$.
\end{lemma}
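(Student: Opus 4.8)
The plan is to analyze what happens on a run of consecutive unsuccessful iterations starting from $k=0$, during which the iterate is fixed at $x_0$ (so $\pi_k(\gamma_{\max}) = \pi_0(\gamma_{\max}) \geq \epsilon$ for all such $k$) and the trust-region radius shrinks geometrically as $\Delta_k = \gammadec^k \Delta_0$. The goal is to show that within $O(\log(1/\epsilon))$ such steps the ratio test must eventually succeed, giving a contradiction with the assumption that all iterations up to that point are unsuccessful.

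First I would suppose, for contradiction, that iterations $0, 1, \dots, k$ are all unsuccessful, so $x_0 = x_1 = \cdots = x_{k+1}$ and $\Delta_j = \gammadec^j \Delta_0$ for $j \leq k+1$. On each such iteration, combining Lemma~\ref{lem_fully_linear} with Assumption~\ref{ass_cauchy_decrease} (exactly as in the proof of Lemma~\ref{lem_amin}) gives
\begin{align}
    |\rho_j - 1| \leq \frac{c_1 (1 + \|H_j\|)\Delta_j^2}{\kappa_p \pi_0(\gamma_{\max}) \min\{\Delta_j, \pi_0(\gamma_{\max})/(1+\|H_j\|)\}}.
\end{align}
Using Assumption~\ref{ass_unbounded_hess} to bound $1 + \|H_j\| \leq 1 + \mu(1+k^t) =: M_k$ (a crude uniform bound over $j \leq k$ suffices), and $\pi_0(\gamma_{\max}) \geq \epsilon$, one sees that once $\Delta_j$ is small enough — specifically once $\Delta_j \leq c$ for a threshold $c$ proportional to $\epsilon / M_k$ (coming from requiring both $\Delta_j \leq \epsilon/M_k$ so the $\min$ equals $\Delta_j$, and $c_1 M_k \Delta_j / (\kappa_p \epsilon) < 1 - \eta$) — iteration $j$ is forced to be successful. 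Since $\Delta_j = \gammadec^j \Delta_0$ decreases geometrically, this happens for all $j \geq j_1$ where $j_1 = O(\log(\Delta_0 M_k / (\epsilon)) / \log(1/\gammadec)) = O(\log(k^t / \epsilon))$.

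The subtlety is that the threshold depends on $M_k$, which itself grows with $k$ under Assumption~\ref{ass_unbounded_hess} — so I must rule out a "race" where the required number of unsuccessful steps keeps growing as fast as $k$ itself. Here is where Lemma~\ref{lem_technical} enters. The condition "iteration $k$ is still unsuccessful" forces $k < j_1$, i.e. roughly $k \lesssim C_1 \log(1/\epsilon) + C_2 \log(k)$ for constants $C_1, C_2$ depending only on algorithm parameters, $\mu$, $t$, $c_1$, $\kappa_p$, $\eta$. Since $\log k$ grows slower than $\sqrt{k}$ (indeed slower than any power), Lemma~\ref{lem_technical} (applied with $a_1 = \Theta(\log(1/\epsilon))$ absorbing the $C_1\log(1/\epsilon)$ term and $a_2$ a constant, after bounding $\log k \leq \sqrt{k}$ for large $k$) yields a finite $k^* = \Theta(\log(1/\epsilon))$ beyond which the inequality $k \lesssim C_1\log(1/\epsilon) + C_2\sqrt{k}$ fails. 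Hence some iteration $k_0 \leq k^* = O(\log(1/\epsilon))$ must be successful, establishing both that a successful iteration exists and the claimed bound on $k_0$.

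The main obstacle I anticipate is handling the $k$-dependence of the Hessian bound cleanly: one must be careful that $M_k$ appears only inside a logarithm in the final estimate (so that $t \in [0,1]$, including $t=1$, still gives a $\log$ bound rather than something worse), and that applying Lemma~\ref{lem_technical} is legitimate — the $\log k$ versus $\sqrt k$ comparison is the crux, and I'd state it as $\log k \leq \sqrt k$ for all $k \geq 1$ to feed directly into that lemma. A minor point to get right is the edge case $t = 0$, where $M_k$ is constant and the argument is simpler (no need for Lemma~\ref{lem_technical} at all, just the geometric decay of $\Delta_k$), but the unified statement still holds.
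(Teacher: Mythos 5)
Your proposal is correct and takes essentially the same route as the paper: the paper obtains the radius lower bound $\Delta_{k_0}\geq a_{\min}\,\epsilon/(1+\mu(1+k_0^t))$ on a run of unsuccessful iterations by invoking Lemma~\ref{lem_amin} (whose proof is precisely your inline ``small $\Delta_j$ forces a successful iteration'' argument via Lemma~\ref{lem_fully_linear} and Assumption~\ref{ass_cauchy_decrease}), then compares with $\Delta_{k_0}=\gammadec^{k_0}\Delta_0$, converts the logarithm of the Hessian bound into a $\sqrt{k_0}$ term, and concludes with Lemma~\ref{lem_technical} exactly as you do. The only difference is cosmetic: you re-derive the key step of Lemma~\ref{lem_amin} rather than citing it.
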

\begin{proof}
Suppose that the iterations $0,\dots,k_0-1$ are unsuccessful, so $x_0 = x_1 = \cdots = x_{k_0}$. Thus, $\pi_0(\gamma_{\max}) = \cdots = \pi_{k_0}(\gamma_{\max}) \geq \epsilon$ and $\Delta_{k_0} = \gammadec^{k_0} \Delta_0$.
    Applying Lemma~\ref{lem_amin} and Assumption~\ref{ass_unbounded_hess}, we have (for all $k$)
    \begin{align}
        \gammadec^{k_0} \Delta_0 = \Delta_{k_0} \geq \frac{\min_{j\leq k_0} \pi_j(\gamma_{\max})}{1 + \max_{j\leq k_0} \|H_j\|} \cdot a_{\min} \geq \frac{\epsilon}{1 + \mu(1+k_0^t)} \cdot a_{\min}. \label{eq_first_success_tmp1}
    \end{align}
    Equivalently,
    \begin{align}
        \log(1+\mu(1+k_0^t)) \geq \log\left(\frac{\epsilon a_{\min}}{\Delta_0}\right) + k_0 \log(\gammadec^{-1}).
    \end{align}
    Using the identity $\log(x) \leq \frac{2}{e}\sqrt{x}$ for all $x>0$ \cite[Section 3.6.1]{Mitrinovic1970}, together with $t\leq 1$ we get
    \begin{align}
        \frac{2}{e}\sqrt{1+\mu+\mu k_0} \geq \log\left(\frac{\epsilon a_{\min}}{\Delta_0}\right) + k_0 \log(\gammadec^{-1}).
    \end{align}
    Now using the identity $\sqrt{x+y} \leq \sqrt{x} + \sqrt{y}$ for $x,y\geq 0$, we have
    \begin{align}
        \frac{2\sqrt{1+\mu}}{e} + \frac{2\sqrt{\mu}}{e} \sqrt{k_0} \geq \log\left(\frac{\epsilon a_{\min}}{\Delta_0}\right) + k_0 \log(\gammadec^{-1}),
    \end{align}
    or
    \begin{align}
        k_0 \leq  \frac{1}{\log(\gammadec^{-1})}\left[\frac{2\sqrt{1+\mu}}{e} + \log\left(\frac{\Delta_0}{\epsilon a_{\min}}\right)\right] + \frac{2\sqrt{\mu}}{e \log(\gammadec^{-1})} \sqrt{k_0}.
    \end{align}
    From Lemma~\ref{lem_technical}, we get that $k_0$ is finite, and $k_0 \leq O(\log(1/\epsilon))$ as $\epsilon\to 0$.
\end{proof}

\begin{remark}
If the model Hessian is formed via quasi-Newton updating, it is common that it is only updated on successful iterations.
In that setting, if $k_0$ is the first successful iteration, then we have $H_0=H_1=\cdots=H_{k_0}$. 
So, \eqref{eq_first_success_tmp1} in the proof of Lemma~\ref{lem_first_success} can be replaced by $\gammadec^{k_0} \Delta_0 \geq \frac{\epsilon}{1 + \|H_0\|}a_{\min}$, from which the final result $k_0 = O(\log(1/\epsilon))$ follows immediately.
\end{remark}

Now consider a desired accuracy level $\epsilon$, and define $k_{\epsilon}$ to be the first iteration with $\pi_{k_{\epsilon}}(\gamma_{\max}) < \epsilon$ (or $k_{\epsilon}=\infty$ if this never occurs).

If $\pi_0(\gamma_{\max}) < \epsilon$, then we have $k_{\epsilon}=0$ and no further effort is required.
Hence we will assume our desired accuracy level satisfies $\epsilon \leq \pi_0(\gamma_{\max})$.
In this case, from Lemma~\ref{lem_first_success} we have a first successful iteration $k_0$ with $k_0=O(\log(1/\epsilon))$.
Since $x_k$ is only changed on successful iterations, we have $\pi_k(\gamma_{\max}) = \pi_0(\gamma_{\max})$ for all $k\leq k_0$, and so we must have $k_{\epsilon} > k_0$.

Next, we assume that
\begin{align}
\text{$\tau$ is a positive integer such that $\gammainc \gammadec^{\tau-1} < 1$},\label{e:tau}
\end{align}
which must exist due to $\gammadec < 1$,
and define the following sets:
\begin{itemize}
    \item $\mathcal{S} := \{j \in\{0,1,2,\dots\} : \rho_j \geq \eta\}$ is the set of all successful iterations
    \item $\mathcal{S}_k := \{j \in \{0,1,2,\dots,k\} : \rho_j \geq \eta\}=\{j \in \{k_0,\dots,k\} : \rho_j \geq \eta\}$ is the set of all successful iterations up to iteration $k$, with $\mathcal{S}(\epsilon) := \mathcal{S}_{k_{\epsilon}}$. 
    \item $\mathcal{U}_k := \{j \in \{0,1,2,\dots,k\} : \rho_j < \eta\}$ is the set of all unsuccessful iterations up to iteration $k$, with $\mathcal{U}(\epsilon) := \mathcal{U}_{k_{\epsilon}}$.
    \item For $k\geq k_0$, define $\mathcal{V}_k := \{j \in \{k_0,\dots,k\} : |\mathcal{S}_j| \geq j/\tau\}$ is the set of iterations for which we have had a relatively high fraction of successful iterations in the history to that point.\footnote{For example, if $\tau=3$ and $j\in \mathcal{V}_k$, then at least $1/3$ of the iterations $0, \dots, j$ were successful. We may choose $\tau=3$ if for example we have the common parameter choices $\gammainc=2$ and $\gammadec=0.5$.} 
    \item For $k\geq k_0$, define $\mathcal{W}_k := \{j \in \{k_0,\dots,k\} : |\mathcal{S}_j| < j/\tau\}$ is the set of iterations for which we have had a relatively low fraction of successful iterations, $\mathcal{W}_k = \{k_0,\dots,k\}\setminus \mathcal{V}_k$.
\end{itemize}

\begin{lemma} \label{lem_sum_split}
    Let $r_k$ be any strictly positive, non-decreasing sequence, and we have at least one successful iteration (i.e.~$k_0$ exists).
    Then, for all $k\geq k_0$,
    \begin{align}
        \tau \sum_{j\in \mathcal{S}_k} \frac{1}{r_j} \geq \sum_{j\in \mathcal{V}_k} \frac{1}{r_j} = \sum_{j=k_0}^{k}  \frac{1}{r_j} - \sum_{j\in \mathcal{W}_k} \frac{1}{r_j},
    \end{align}
    using the convention $\sum_{j\in\varnothing} \frac{1}{r_j} = 0$.
\end{lemma}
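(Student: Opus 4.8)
The plan is to treat the equality and the inequality separately. The equality is essentially a definition: $\mathcal{V}_k$ and $\mathcal{W}_k$ are carved out of the common index set $\{k_0,\dots,k\}$ by the complementary conditions $|\mathcal{S}_j|\geq j/\tau$ and $|\mathcal{S}_j|<j/\tau$, so they partition $\{k_0,\dots,k\}$, and splitting $\sum_{j=k_0}^{k}\frac{1}{r_j}$ accordingly (with the empty-sum convention) yields $\sum_{j\in\mathcal{V}_k}\frac{1}{r_j}=\sum_{j=k_0}^{k}\frac{1}{r_j}-\sum_{j\in\mathcal{W}_k}\frac{1}{r_j}$.

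For the inequality $\tau\sum_{j\in\mathcal{S}_k}\frac{1}{r_j}\geq\sum_{j\in\mathcal{V}_k}\frac{1}{r_j}$, I would construct an explicit ``charging'' map $\phi\colon\mathcal{V}_k\to\mathcal{S}_k$ that never increases the weight $1/r_j$ and has all fibres of size at most $\tau$. List the successful iterations up to $k$ in increasing order as $s_1<s_2<\dots<s_m$, where $m:=|\mathcal{S}_k|\geq 1$ and $s_1=k_0$ (there are no successful iterations before $k_0$). For $j\in\mathcal{V}_k$ set $\phi(j):=s_{\lceil j/\tau\rceil}$. This is well defined and satisfies $\phi(j)\leq j$: indeed, $|\mathcal{S}_j|\geq j/\tau$ with $|\mathcal{S}_j|\in\NN$ forces $|\mathcal{S}_j|\geq\lceil j/\tau\rceil$, while $|\mathcal{S}_j|\leq|\mathcal{S}_k|=m$, so $1\leq\lceil j/\tau\rceil\leq m$; and since $\mathcal{S}_j$ consists of exactly $|\mathcal{S}_j|$ successful iterations, all $\leq j$, we get $s_{\lceil j/\tau\rceil}\leq s_{|\mathcal{S}_j|}\leq j$. (The degenerate index $j=0$, which lies in $\mathcal{V}_k$ only when $k_0=0$, is handled directly using $r_0=r_{s_1}$.)

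Two facts then close the argument. First, monotonicity of $r$ together with $\phi(j)\leq j$ gives $1/r_j\leq 1/r_{\phi(j)}$ for all $j\in\mathcal{V}_k$. Second, because $\tau$ is a positive integer (by \eqref{e:tau}), for each $\ell$ the set $\{j\in\NN:\lceil j/\tau\rceil=\ell\}=\{j:\tau(\ell-1)<j\leq\tau\ell\}$ has exactly $\tau$ elements, so $\bigl|\phi^{-1}(s_\ell)\bigr|\leq\tau$. Hence
\[
\sum_{j\in\mathcal{V}_k}\frac{1}{r_j}\ \leq\ \sum_{j\in\mathcal{V}_k}\frac{1}{r_{\phi(j)}}\ =\ \sum_{\ell=1}^{m}\bigl|\phi^{-1}(s_\ell)\bigr|\,\frac{1}{r_{s_\ell}}\ \leq\ \tau\sum_{\ell=1}^{m}\frac{1}{r_{s_\ell}}\ =\ \tau\sum_{j\in\mathcal{S}_k}\frac{1}{r_j}.
\]

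The one real decision is the choice of charging map: it must land on a successful iteration no later than $j$ (so that monotonicity of $1/r_j$ applies) yet spread the indices of $\mathcal{V}_k$ thinly enough that no successful iteration is hit more than $\tau$ times. The map $\phi(j)=s_{\lceil j/\tau\rceil}$ does both, essentially by definition of $\mathcal{V}_k$ and integrality of $\tau$; the remaining steps — the partition identity, the monotonicity bound, and the $k_0=0$ boundary index — are routine bookkeeping.
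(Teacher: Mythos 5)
Your overall strategy is the natural one (the paper itself gives no argument here, only a citation to \cite[Lemma~7]{Diouane2024}): the partition identity for the equality, plus a charging map $\phi(j)=s_{\lceil j/\tau\rceil}$ from $\mathcal{V}_k$ into $\mathcal{S}_k$ with $\phi(j)\leq j$ and fibres of size at most $\tau$. For every index $j\geq 1$ your three supporting claims — $\lceil j/\tau\rceil\leq|\mathcal{S}_j|\leq|\mathcal{S}_k|$, $s_{\lceil j/\tau\rceil}\leq s_{|\mathcal{S}_j|}\leq j$, and $|\{j:\lceil j/\tau\rceil=\ell\}|=\tau$ — are correct, so whenever $k_0\geq 1$ your proof is complete.

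The genuine gap is the parenthetical treatment of $j=0$. If $k_0=0$, then $0\in\mathcal{V}_k$ automatically (since $|\mathcal{S}_0|=1\geq 0$), and every $j\in\{1,\dots,\tau\}$ with $j\leq k$ also lies in $\mathcal{V}_k$ (since $|\mathcal{S}_j|\geq 1\geq j/\tau$) even if none of these iterations is successful. Charging $j=0$ to $s_1=0$ therefore gives the fibre of $s_1$ up to $\tau+1$ elements, and the remark ``$r_0=r_{s_1}$'' cannot absorb the extra term: the left-hand side contains only $\tau$ copies of $1/r_{s_1}$, and the indices $j=1,\dots,\tau$ may already consume all of them. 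Indeed the inequality as stated can fail in exactly this corner: take $\tau=2$ (admissible when $\gammainc\gammadec<1$), iteration $0$ successful, iterations $1,2$ unsuccessful, $k=2$, and $r_j$ constant (e.g.\ $r_j=1+2\mu$, which is the actual $r$ used in Theorem~\ref{thm_wcc_final} when $t=0$); then $\mathcal{S}_2=\{0\}$, $\mathcal{V}_2=\{0,1,2\}$, and $\tau\sum_{j\in\mathcal{S}_2}1/r_j=2/r_0<3/r_0=\sum_{j\in\mathcal{V}_2}1/r_j$. So no bookkeeping can close your step as written: one must either exclude the index $j=k_0=0$ (equivalently define $\mathcal{V}_k$ via $|\mathcal{S}_j|\geq(j+1)/\tau$), or weaken the conclusion to $(\tau+1)\sum_{j\in\mathcal{S}_k}1/r_j\geq\sum_{j\in\mathcal{V}_k}1/r_j$, which your map (extended by $\phi(0)=s_1$, fibres of size at most $\tau+1$) proves immediately and which suffices for the downstream complexity bounds up to a change of constant.
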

\begin{proof}
    This is \cite[Lemma 7]{Diouane2024}.
\end{proof}

\begin{lemma} \label{lem_sum_bound}
    Suppose Assumptions~\ref{ass_smoothness}, \ref{ass_unbounded_hess} and \ref{ass_cauchy_decrease} hold, and we have at least one successful iteration (i.e.~$k_0$ exists).
    Then for any $\epsilon>0$,
    \begin{align}
        \sum_{k\in\mathcal{W}_{k_{\epsilon}}} \frac{1}{1 + \mu(1+k^t)} \leq \frac{\Delta_0 \xi}{\epsilon a_{\min}},
    \end{align}
    again with the convention $\sum_{k\in\varnothing} a_k = 0$, and where 
    \begin{align}
        \xi := \sum_{k\in\mathbb{N}} (\gammainc \gammadec^{\tau-1})^{k/\tau} < \infty.
    \end{align}
\end{lemma}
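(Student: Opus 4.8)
The plan is to sandwich the trust-region radius $\Delta_k$, for the indices $k\in\mathcal{W}_{k_\epsilon}$, between a lower bound coming from Lemma~\ref{lem_amin} and an upper bound coming from the defining property of $\mathcal{W}_{k_\epsilon}$ — namely that fewer than a $1/\tau$ fraction of the first $k$ iterations were successful — and then to sum the resulting per-index estimate over $\mathcal{W}_{k_\epsilon}$, which produces exactly the geometric series defining $\xi$.

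For the upper bound, unwinding the radius-update rule in Algorithm~\ref{alg_nonsmooth_tr} gives, for every $k\ge 1$, $\Delta_k = \Delta_0\,\gammainc^{|\mathcal{S}_{k-1}|}\gammadec^{k-|\mathcal{S}_{k-1}|} = \Delta_0\,(\gammainc/\gammadec)^{|\mathcal{S}_{k-1}|}\gammadec^{k}$. If $k\in\mathcal{W}_{k_\epsilon}$ then $k\ge 1$ and $|\mathcal{S}_{k-1}|\le|\mathcal{S}_k|<k/\tau$, so, using $\gammainc/\gammadec>1$, one gets $\Delta_k < \Delta_0\,(\gammainc\gammadec^{\tau-1})^{k/\tau}$. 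For the lower bound I rearrange Lemma~\ref{lem_amin} into $\Delta_k \ge a_{\min}\,\min_{j\le k}\pi_j(\gamma_{\max})\,/\,(1+\max_{j\le k}\|H_j\|)$; for $k\le k_\epsilon-1$ the definition of $k_\epsilon$ forces $\min_{j\le k}\pi_j(\gamma_{\max})\ge\epsilon$ while Assumption~\ref{ass_unbounded_hess} gives $\max_{j\le k}\|H_j\|\le\mu(1+k^t)$, hence $1/(1+\mu(1+k^t))\le\Delta_k/(a_{\min}\epsilon)$. Combining the two bounds, $1/(1+\mu(1+k^t)) < (\Delta_0/(a_{\min}\epsilon))\,(\gammainc\gammadec^{\tau-1})^{k/\tau}$ for every $k\in\mathcal{W}_{k_\epsilon}$ with $k<k_\epsilon$.

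The only delicate index is $k=k_\epsilon$ itself, which may belong to $\mathcal{W}_{k_\epsilon}$ but for which the lower bound above degenerates, since $\min_{j\le k_\epsilon}\pi_j(\gamma_{\max})=\pi_{k_\epsilon}(\gamma_{\max})$ can be arbitrarily small. I dispose of it with the observation (already used before the lemma) that iteration $k_\epsilon-1$ must be successful: otherwise $x_{k_\epsilon}=x_{k_\epsilon-1}$, whence $\pi_{k_\epsilon}(\gamma_{\max})=\pi_{k_\epsilon-1}(\gamma_{\max})\ge\epsilon$, a contradiction. Therefore $\Delta_{k_\epsilon}=\gammainc\Delta_{k_\epsilon-1}\ge\Delta_{k_\epsilon-1}$, and applying the previous paragraph's estimate at index $k_\epsilon-1$ (where $\min_{j\le k_\epsilon-1}\pi_j(\gamma_{\max})\ge\epsilon$) together with monotonicity of $s\mapsto s^t$ yields $1/(1+\mu(1+k_\epsilon^t)) \le 1/(1+\mu(1+(k_\epsilon-1)^t)) \le \Delta_{k_\epsilon-1}/(a_{\min}\epsilon) \le \Delta_{k_\epsilon}/(a_{\min}\epsilon) < (\Delta_0/(a_{\min}\epsilon))\,(\gammainc\gammadec^{\tau-1})^{k_\epsilon/\tau}$, so the same per-index bound in fact holds for every $k\in\mathcal{W}_{k_\epsilon}$.

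Finally, summing the per-index bound over $k\in\mathcal{W}_{k_\epsilon}\subseteq\NN$ and enlarging the index set to all of $\NN$ gives $\sum_{k\in\mathcal{W}_{k_\epsilon}} 1/(1+\mu(1+k^t)) \le (\Delta_0/(a_{\min}\epsilon))\sum_{k\in\NN}(\gammainc\gammadec^{\tau-1})^{k/\tau} = \Delta_0\xi/(a_{\min}\epsilon)$, and $\xi<\infty$ because $\gammainc\gammadec^{\tau-1}<1$ by \eqref{e:tau} (and it is positive), so the series is geometric with ratio $(\gammainc\gammadec^{\tau-1})^{1/\tau}\in(0,1)$. I expect the boundary index $k=k_\epsilon$ to be the main point requiring care; the remainder is a direct combination of Lemma~\ref{lem_amin}, Assumption~\ref{ass_unbounded_hess}, and the radius-update rule.
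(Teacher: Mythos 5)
Your proof is correct and follows essentially the same route as the paper's: a lower bound $\Delta_k \ge a_{\min}\epsilon/(1+\mu(1+k^t))$ from Lemma~\ref{lem_amin}, Assumption~\ref{ass_unbounded_hess} and the definition of $k_\epsilon$, an upper bound $\Delta_k \le \Delta_0(\gammainc\gammadec^{\tau-1})^{k/\tau}$ from unrolling the radius update together with $|\mathcal{S}_k| < k/\tau$, and then summation over $\mathcal{W}_{k_\epsilon}$ enlarged to the full geometric series $\xi$. The one genuine difference is your explicit treatment of the boundary index $k=k_\epsilon$ (where $\min_{j\le k}\pi_j(\gamma_{\max})$ may fall below $\epsilon$) via the successfulness of iteration $k_\epsilon-1$; the paper's proof passes over this case silently, so your version is, if anything, slightly more careful (likewise your unrolling with $|\mathcal{S}_{k-1}|$ rather than the paper's $|\mathcal{S}_k|$, which is harmless there since only an upper bound on $\Delta_k$ is needed).
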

\begin{proof}
    First, note that $\xi<\infty$ since it is a geometric series $\sum_{k\in\mathbb{N}} \delta^k$ for $\delta=(\gammainc \gammadec^{\tau-1})^{1/\tau}$ with $\delta<1$ by the definition of $\tau$ in \eqref{e:tau}.

    If $\mathcal{W}_{k_{\epsilon}}=\varnothing$ the result is trivial.
    Otherwise, fix $k\in\mathcal{W}_{k_{\epsilon}}$
    and let $r_k = 1 + \mu(1+k^t)$. We have in particular that $k\leq k_{\epsilon}$, which leads to
\begin{equation*}
\epsilon<\min_{j\leq k} \pi_j(\gamma_{\max}).
\end{equation*}
Together with Assumption~\ref{ass_unbounded_hess}, Lemma~\ref{lem_amin}, \eqref{eq_ak_defn}, and the update mechanism for $\Delta_k$, we obtain
    \begin{align}
        \frac{a_{\min} \epsilon}{r_k} \leq a_k \frac{\min_{j\leq k} \pi_j(\gamma_{\max})}{1 + \max_{j\leq k} \|H_j\|} = \Delta_k = \gammainc^{|\mathcal{S}_k|} \gammadec^{k-|\mathcal{S}_k|} \Delta_0.
    \end{align}
    Since $k\in\mathcal{W}_{k_{\epsilon}}$, we have $k > \tau|\mathcal{S}_k|$, and so
    \begin{align}
        \frac{a_{\min} \epsilon}{r_k} \leq \gammainc^{k/\tau} \gammadec^{k-k/\tau} \Delta_0 = (\gammainc \gammadec^{\tau-1})^{k/\tau} \Delta_0.
    \end{align}
    Summing over all $k\in\mathcal{W}_{k_{\epsilon}}$ then gives
    \begin{align}
        a_{\min} \epsilon \sum_{k\in\mathcal{W}_{k_{\epsilon}}} \frac{1}{r_k} \leq \sum_{k\in\mathcal{W}_{k_{\epsilon}}} (\gammainc \gammadec^{\tau-1})^{k/\tau} \Delta_0 \leq \sum_{k\in\mathbb{N}} (\gammainc \gammadec^{\tau-1})^{k/\tau} \Delta_0,
    \end{align}
    and we are done.
\end{proof}

We can now state our main result on the worst-case complexity.

\begin{theorem} \label{thm_wcc_final}
    Suppose Assumptions~\ref{ass_smoothness}, \ref{ass_unbounded_hess} and \ref{ass_cauchy_decrease} hold.
    Then, for any $0 < \epsilon \leq \pi_0(\gamma_{\max})$, if $0 \leq t < 1$ we have
    \begin{align}
        k_{\epsilon} \leq \left[(1-t)(c_2 \epsilon^{-2} + c_3 \epsilon^{-1})\frac{1+\mu(1+k_0^t)}{1+k_0^t} + (k_0+1)^{1-t}\right]^{1/(1-t)} - 2, \label{eq_wcc_final_1}
    \end{align}
    and if $t=1$ we have
    \begin{align}
        k_{\epsilon} \leq \exp\left((c_2 \epsilon^{-2} + c_3 \epsilon^{-1})\frac{1+\mu(1+k_0^t)}{1+k_0^t}\right)(k_0+1) - 2, \label{eq_wcc_final_2}
    \end{align}
    where in both bounds we use the constants
    \begin{align}
        c_2 := \frac{\tau [F(x_0) - F_{\textnormal{low}}]}{\eta \kappa a_{\min}} \qquad \text{and} \qquad c_3 := \frac{\Delta_0 \xi}{a_{\min}}.
    \end{align}
    If instead $\epsilon > \pi_0(\gamma_{\max})$, then $k_{\epsilon}=0$.
\end{theorem}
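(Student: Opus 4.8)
The plan is to sum the sufficient-decrease guarantee from Lemma~\ref{lem_suff_decrease} over the successful iterations up to $k_{\epsilon}$, use the telescoping of $F$ along successful steps together with the acceptance threshold $\eta$, and then convert the resulting lower bound on $\sum_{j\in\mathcal S(\epsilon)} (1+\max_{i\le j}\|H_i\|)^{-1}$ into an upper bound on $k_{\epsilon}$ via the splitting machinery of Lemmas~\ref{lem_sum_split}, \ref{lem_sum_bound}, and the integral estimate of Lemma~\ref{lem_technical_sum}. Concretely: on each successful iteration $j$ we have $F(x_j) - F(x_{j+1}) = F(x_j) - F(x_j+p_j) \ge \eta\,[m_j(0)-m_j(p_j)] \ge \eta\,\kappa_p a_{\min}\,\dfrac{\min_{i\le j}\pi_i(\gamma_{\max})^2}{1+\max_{i\le j}\|H_i\|}$. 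For every $j < k_{\epsilon}$ we have $\min_{i\le j}\pi_i(\gamma_{\max}) \ge \epsilon$ (this is why we must show $k_\epsilon > k_0$ first, which the text already does), so $F(x_j)-F(x_{j+1}) \ge \eta\kappa_p a_{\min}\epsilon^2 (1+\max_{i\le j}\|H_i\|)^{-1}$. Summing over $j\in\mathcal S(\epsilon) = \mathcal S_{k_\epsilon}$ and telescoping against $F(x_0)-F_{\textnormal{low}}$ (using $F$ is constant on unsuccessful iterations) gives
\[
\sum_{j\in\mathcal S(\epsilon)} \frac{1}{1+\max_{i\le j}\|H_i\|} \le \frac{F(x_0)-F_{\textnormal{low}}}{\eta\kappa_p a_{\min}\epsilon^2},
\]
and then by Assumption~\ref{ass_unbounded_hess}, $1+\max_{i\le j}\|H_i\| \le 1+\mu(1+j^t)$, so replacing $\|H\|$-maxima by $\mu(1+j^t)$ only decreases each term; hence I would prefer to instead lower-bound $\sum_{j\in\mathcal S(\epsilon)} \frac{1}{1+\mu(1+j^t)}$ by the same quantity — actually the inequality goes the right way since $\frac{1}{1+\max\|H_i\|}\ge\frac{1}{1+\mu(1+j^t)}$, giving $\sum_{j\in\mathcal S(\epsilon)}\frac{1}{1+\mu(1+j^t)}\le c_2\epsilon^{-2}/\tau$ after absorbing constants (matching the $c_2$ in the statement up to the $\tau$ factor which reappears below).

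Next I apply Lemma~\ref{lem_sum_split} with $r_j = 1+\mu(1+j^t)$ to get $\tau\sum_{j\in\mathcal S(\epsilon)} r_j^{-1} \ge \sum_{j=k_0}^{k_\epsilon} r_j^{-1} - \sum_{j\in\mathcal W_{k_\epsilon}} r_j^{-1}$, then bound the $\mathcal W$-sum using Lemma~\ref{lem_sum_bound} by $\frac{\Delta_0\xi}{\epsilon a_{\min}} = c_3\epsilon^{-1}$. Combining,
\[
\sum_{j=k_0}^{k_\epsilon} \frac{1}{1+\mu(1+j^t)} \le \tau\sum_{j\in\mathcal S(\epsilon)}\frac{1}{r_j} + \frac{c_3}{\epsilon} \le \frac{\tau[F(x_0)-F_{\textnormal{low}}]}{\eta\kappa_p a_{\min}}\epsilon^{-2} + c_3\epsilon^{-1} = c_2\epsilon^{-2} + c_3\epsilon^{-1}.
\]
Now I invoke Lemma~\ref{lem_technical_sum} with $k_1 = k_0$, $k_2 = k_\epsilon$ (assuming $k_\epsilon > k_0$, finite — if $k_\epsilon=\infty$ the left side diverges for $t\le 1$, contradicting the finite bound, which also proves termination): it gives $\sum_{j=k_0}^{k_\epsilon} r_j^{-1} \ge \frac{(k_0+1)^t}{1+\mu(1+(k_0+1)^t)} \int_{k_0+1}^{k_\epsilon+2} s^{-t}\,ds$. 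Evaluating the integral: for $t<1$ it is $\frac{(k_\epsilon+2)^{1-t} - (k_0+1)^{1-t}}{1-t}$, for $t=1$ it is $\log\frac{k_\epsilon+2}{k_0+1}$. Rearranging for $k_\epsilon$ — and replacing $(k_0+1)^t$ by $k_0^t$ in a couple of places to match the exact form written (the paper writes $\frac{1+\mu(1+k_0^t)}{1+k_0^t}$; I'd use $(k_0+1)^t \ge k_0^t$ and $\frac{x}{1+\mu(1+x)}$ monotonicity carefully to land on exactly that coefficient, or else just carry $(k_0+1)^t$ through and note it's bounded by the stated expression) — yields \eqref{eq_wcc_final_1} and \eqref{eq_wcc_final_2}. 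The case $\epsilon > \pi_0(\gamma_{\max})$ is immediate from the definition of $k_\epsilon$.

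The main obstacle, or at least the fiddliest part, is bookkeeping the constants so that the final algebraic rearrangement produces \emph{exactly} the coefficient $\frac{1+\mu(1+k_0^t)}{1+k_0^t}$ appearing in the theorem: Lemma~\ref{lem_technical_sum} naturally produces $\frac{(k_0+1)^t}{1+\mu(1+(k_0+1)^t)}$ on the left, whose reciprocal is $\frac{1+\mu(1+(k_0+1)^t)}{(k_0+1)^t} = \frac{1}{(k_0+1)^t} + \mu + \mu(k_0+1)^t$, and one must check this is dominated by (or can be massaged into) the stated quantity with $k_0$ in place of $k_0+1$; this requires a short monotonicity argument on the function $x\mapsto \frac{1+\mu(1+x)}{x}$ and the observation $1\le k_0$ (true since $k_0 \ge k_0^{\textrm{first success}}\ge 1$, as iteration $0$ with $\pi_0(\gamma_{\max})\ge\epsilon$ may or may not be successful — one should double-check whether $k_0\ge 1$ always, and if $k_0=0$ is possible, verify the bound still holds with the convention $0^t$, which for $t>0$ is $0$, forcing a separate trivial treatment or the use of $(k_0+1)^t$ throughout). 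Everything else is a direct chaining of the already-established lemmas; no new conceptual ingredient is needed beyond the telescoping-sum idea that is standard in trust-region complexity analysis and is exactly the template of \cite{Diouane2024}.
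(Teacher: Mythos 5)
Your proposal is correct and follows essentially the same route as the paper's proof: telescope the $\eta$-fraction of the sufficient decrease from Lemma~\ref{lem_suff_decrease} over successful iterations where $\min_{j\le k}\pi_j(\gamma_{\max})\ge\epsilon$, convert this into a bound on $\sum_{j=k_0}^{k_\epsilon}\bigl(1+\mu(1+j^t)\bigr)^{-1}$ via Lemmas~\ref{lem_sum_split} and~\ref{lem_sum_bound}, and finish with the integral estimate of Lemma~\ref{lem_technical_sum} and a rearrangement for $k_\epsilon$. The constant-bookkeeping concern you flag (the $(k_0+1)^t$ versus $1+k_0^t$ coefficient coming out of Lemma~\ref{lem_technical_sum}) is glossed over in the paper's own proof as well, which simply writes the factor $(1+k_0^t)/r_{k_0}$, so your hedging there does not constitute a gap relative to the paper's argument.
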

\begin{proof}
    If $\epsilon > \pi_0(\gamma_{\max})$, the result is trivial. 
    Otherwise, from $\epsilon \leq \pi_0(\gamma_{\max})$ and Lemma~\ref{lem_first_success}, we know that there must be a first successful iteration (i.e.~$k_0$ exists).

    For successful iterations $k\in\mathcal{S}(\epsilon)$, we have, using Lemma~\ref{lem_suff_decrease},
    \begin{align}
        F(x_k) - F(x_k+p_k) &\geq \eta (m_k(0) - m_k(p_k)), \\
        &\geq \eta \kappa a_{\min} \cdot \frac{\min_{j\leq k} \pi_j(\gamma_{\max})^2}{1 + \max_{j\leq k} \|H_j\|}, \\
        &\geq \eta \kappa a_{\min} \cdot \frac{\epsilon^2}{1 + \mu(1+k^t)}.
    \end{align}
    Summing over all $k\in\mathcal{S}(\epsilon)$ (and noting that $x_{k+1}=x_k$ for $k\notin\mathcal{S}(\epsilon)$), we have
    \begin{align}
        F(x_0) - F_{\textnormal{low}} \geq \sum_{k\in\mathcal{S}(\epsilon)} F(x_k) - F(x_k + p_k) \geq \eta \kappa a_{\min} \epsilon^2 \sum_{k\in\mathcal{S}(\epsilon)} \frac{1}{1 + \mu(1+k^t)}.
    \end{align}
    Define $r_k = 1 + \mu(1+k^t)$, which is strictly positive and increasing.
    Hence Lemmas~\ref{lem_sum_split} and \ref{lem_sum_bound} give
    \begin{align}
        F(x_0) - F_{\textnormal{low}} &\geq \frac{\eta \kappa a_{\min} \epsilon^2}{\tau} \left[\sum_{j=k_0}^{k_{\epsilon}} \frac{1}{r_j} - \sum_{j\in\mathcal{W}_{k_{\epsilon}}} \frac{1}{r_j}\right], \\
        &\geq \frac{\eta \kappa a_{\min} \epsilon^2}{\tau} \left[\sum_{j=k_0}^{k_{\epsilon}} \frac{1}{r_j} - \frac{\Delta_0 \xi}{\epsilon a_{\min}}\right].
    \end{align}
    Finally, Lemma~\ref{lem_technical_sum} gives
    \begin{align}
        F(x_0) - F_{\textnormal{low}} &\geq \frac{\eta \kappa a_{\min} \epsilon^2}{\tau} \left[\frac{(1+k_0^t)}{r_{k_0}} \int_{k_0+1}^{k_{\epsilon}+2} \frac{1}{s^t} ds - \frac{\Delta_0 \xi}{\epsilon a_{\min}}\right],
    \end{align}
    or equivalently
    \begin{align}
        \int_{k_0+1}^{k_{\epsilon}+2} \frac{1}{s^t} ds \leq \left(c_2 \epsilon^{-2} + c_3 \epsilon^{-1}\right) \frac{r_{k_0}}{1+k_0^t}.
    \end{align}
    We get the final result from evaluating the integral,
    \begin{align}
        \int_{k_0+1}^{k_{\epsilon}+2} \frac{1}{s^t} ds = \begin{cases} \frac{(k_{\epsilon}+2)^{1-t} - (k_0+1)^{1-t}}{1-t}, & \text{if $0 \leq t < 1$}, \\ \log(k_{\epsilon}+2)-\log(k_0+1), & \text{if $t=1$}, \end{cases}
    \end{align}
    and rearranging for $k_{\epsilon}$.
\end{proof}

In summary, the worst-case complexity bounds are the same as the smooth case \cite{Diouane2024}, as summarized below.
In the case of uniformly bounded Hessians ($t=0$), we recover the standard $O(\epsilon^{-2})$ worst-case iteration complexity for smooth trust-region methods (e.g.~\cite[Theorem 2.3.7]{Cartis2022}).

\begin{corollary}
    Suppose the assumptions of Theorem~\ref{thm_wcc_final} hold.
    If $0\leq t <1$, then $k_{\epsilon} = O(\epsilon^{-2/(1-t)})$, and if $t=1$, then $k_{\epsilon} = \tilde{O}(e^{c\epsilon^{-2}})$ for some $c>0$, where $\tilde{O}(\cdot)$ hides logarithmic factors of size $\log(1/\epsilon)$.
\end{corollary}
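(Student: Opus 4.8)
The plan is to read off both asymptotic statements directly from the explicit bounds \eqref{eq_wcc_final_1} and \eqref{eq_wcc_final_2} in Theorem~\ref{thm_wcc_final}, using Lemma~\ref{lem_first_success} to control the $\epsilon$-dependence of $k_0$. Since we are interested in the regime $\epsilon\to 0$, we may assume $\epsilon\leq\pi_0(\gamma_{\max})$ (otherwise $k_\epsilon=0$), so that $k_0$ exists and, by Lemma~\ref{lem_first_success}, $k_0=O(\log(1/\epsilon))$.

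The first step is to observe that the ``Hessian-growth correction factor'' $\frac{1+\mu(1+k_0^t)}{1+k_0^t}$ appearing in both bounds is bounded above by an absolute constant depending only on $\mu$ and $t$. Indeed, this quantity equals $\mu + \frac{1}{1+k_0^t}\leq \mu+1$ for every $k_0\geq 0$; moreover, when $t>0$ it tends to $\mu$ as $k_0\to\infty$ (hence as $\epsilon\to 0$). Thus $\left(c_2\epsilon^{-2}+c_3\epsilon^{-1}\right)\frac{1+\mu(1+k_0^t)}{1+k_0^t} = O(\epsilon^{-2})$, since $\epsilon^{-2}$ dominates $\epsilon^{-1}$ as $\epsilon\to 0$ and $c_2,c_3$ are constants independent of $\epsilon$.

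For the case $0\leq t<1$, the remaining term in \eqref{eq_wcc_final_1} is $(k_0+1)^{1-t}$, and $k_0=O(\log(1/\epsilon))$ gives $(k_0+1)^{1-t}=O\big((\log(1/\epsilon))^{1-t}\big)=O(\log(1/\epsilon))$, which is negligible compared to $\epsilon^{-2}$. Hence the bracketed expression in \eqref{eq_wcc_final_1} is $O(\epsilon^{-2})$, and raising to the power $1/(1-t)$ (a fixed positive exponent) and subtracting the constant $2$ yields $k_\epsilon = O(\epsilon^{-2/(1-t)})$, as claimed. For the case $t=1$, the exponent in \eqref{eq_wcc_final_2} is $O(\epsilon^{-2})$ by the previous paragraph, so there exists $c>0$ with $\exp\!\big((c_2\epsilon^{-2}+c_3\epsilon^{-1})\frac{1+\mu(1+k_0)}{1+k_0}\big)\leq e^{c\epsilon^{-2}}$ for all small $\epsilon$; multiplying by $(k_0+1)=O(\log(1/\epsilon))$ and subtracting $2$ gives $k_\epsilon \leq O(\log(1/\epsilon))\,e^{c\epsilon^{-2}} = \tilde{O}(e^{c\epsilon^{-2}})$.

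There is no real obstacle here: the argument is purely a matter of extracting leading-order asymptotics from closed-form bounds. The only point requiring a moment's care is that $k_0$ itself varies with $\epsilon$, so one must confirm (as above) that both the correction factor $\frac{1+\mu(1+k_0^t)}{1+k_0^t}$ stays bounded and the additive $(k_0+1)^{1-t}$ or multiplicative $(k_0+1)$ terms contribute only lower-order (logarithmic) factors; the bound $k_0=O(\log(1/\epsilon))$ from Lemma~\ref{lem_first_success} is exactly what makes this work, and it is why the $t=1$ bound carries a $\tilde{O}$ rather than an $O$.
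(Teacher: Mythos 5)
Your proposal is correct and follows essentially the same route as the paper, which likewise deduces the corollary directly from the bounds \eqref{eq_wcc_final_1} and \eqref{eq_wcc_final_2} together with $k_0=O(\log(1/\epsilon))$ from Lemma~\ref{lem_first_success}. You simply spell out the details (boundedness of the factor $\mu+\tfrac{1}{1+k_0^t}$ and the lower-order contribution of the $k_0$ terms) that the paper leaves implicit.
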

\begin{proof}
    This follows directly from \eqref{eq_wcc_final_1} and \eqref{eq_wcc_final_2}, and then using $k_0=O(\log(1/\epsilon))$ from Lemma~\ref{lem_first_success}.
\end{proof}

\section{Projected Proximal Gradient Subproblem Solver} \label{sec_subproblem}

In this section we introduce our new solver for the nonsmooth trust-region subproblem (c.f.~\eqref{eq_trs})
\begin{align}
    p^* \approx \argmin_{s\in\RR^d} m(p) := c + g^\top p + \frac{1}{2} p^\top H p + h(x+p), \quad \text{s.t.} \quad \|p\| \leq \Delta, \label{eq_trs_generic}
\end{align}
where here $x\in\RR^d$ is the current iterate, $c\in\RR$, $g\in\RR^d$ and $H\in\RR^{d\times d}$ (symmetric) form the current quadratic approximation for $f$ and $\Delta>0$ is the current trust-region radius.
Our approach is based on minimizing $m$ using the proximal gradient method \cite[Chapter 10.2]{Beck2017}, with the constraint $\|p\| \leq \Delta$ enforced at the end by projecting into the feasible region.

A full specification is given in Algorithm~\ref{alg_ppg}.
In particular, we note that the stepsize $\gamma$ for the proximal gradient part is a (to-be-specified) input, and to reduce the total effort we terminate the proximal gradient part early if we move too far outside the trust-region, $\|u_i - x\| > \mu_u \Delta_k$ (for $\mu_u > 1$).

\begin{algorithm}[htb]
\begin{algorithmic}[1]
\Statex Parameters: stepsize $\gamma>0$, maximum iterations $N\in\mathbb{N}$ and trust-region scaling $\mu_u \geq 1$.
\State Set $i=0$ and $u_0 = x$.
\While{$i < N$ and $\|u_i - x\| \leq \mu_u \Delta$}
    \State $u_{i+1} = \prox_{\gamma h}(u_i-\gamma g - \gamma H (u_i-x))$.
    \State $i = i + 1$.
\EndWhile
\State \textbf{return} $p^* = \frac{\Delta}{\max\{\Delta,\|u_i-x\|\}}(u_i-x)$.
\end{algorithmic}
\caption{Projected proximal gradient method for nonsmooth trust-region subproblem \eqref{eq_trs_generic}.}
\label{alg_ppg}
\end{algorithm}

First, we illustrate a lower bound for the decrease in $m$ achieved in the first iteration.

\begin{lemma} \label{l:prox descent}
    Suppose $h$ satisfies Assumption~\ref{ass_smoothness}\ref{ass_smoothness_h}, $\|H\| \leq L$ and $\lambda := \lambda_{\min}(H)$. 
    Then, for any iteration $i$ of Algorithm~\ref{alg_ppg},
    \begin{align}
        m(u_i-x) - m(u_{i+1}-x) \geq \left(\frac{1}{\gamma} - L + \frac{\lambda}{2}\right) \|u_{i+1}-u_i\|^2.
    \end{align}
\end{lemma}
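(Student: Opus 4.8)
The plan is to recognize the update in Algorithm~\ref{alg_ppg} as one step of proximal gradient descent on the model $m$, taken in the shifted variable $p := u - x$, and then to carry out the usual sufficient-decrease argument, routed through the $\lambda$-convexity of $m$ from Example~\ref{ex:lm-cvx}. Concretely, set $p_i := u_i - x$, so the target quantity is $m(p_i) - m(p_{i+1})$, and decompose $m(p) = q(p) + h(x+p)$ with $q(p) := c + g^\top p + \frac{1}{2}p^\top H p$ and $\grad q(p) = g + Hp$; in particular $g + H(u_i - x) = \grad q(p_i)$, so the update reads $u_{i+1} = \prox_{\gamma h}(u_i - \gamma\grad q(p_i))$, which after subtracting $x$ is exactly the proximal gradient step on $m$ from $p_i$. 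Since $h$ is proper, lower semicontinuous and convex (Assumption~\ref{ass_smoothness}\ref{ass_smoothness_h}), $m$ is proper and lower semicontinuous, and by Example~\ref{ex:lm-cvx}(ii) it is $\lambda$-convex with $\lambda = \lambda_{\min}(H)$.

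First I would extract a subgradient of $m$ at the new iterate from the proximity-operator optimality condition. By Proposition~\ref{p:prox}(i), $u_{i+1} = \prox_{\gamma h}(u_i - \gamma\grad q(p_i))$ is equivalent to $(u_i - \gamma\grad q(p_i)) - u_{i+1} \in \gamma\,\partial h(u_{i+1})$, i.e., using $u_i - u_{i+1} = p_i - p_{i+1}$ and $u_{i+1} = x + p_{i+1}$, $\frac{1}{\gamma}(p_i - p_{i+1}) - \grad q(p_i) \in \partial h(x + p_{i+1})$. Since $q$ is differentiable, Proposition~\ref{p:chainrule-Mor06} applied to $m = q + h(x+\cdot)$ gives $\partial m(p_{i+1}) = \grad q(p_{i+1}) + \partial h(x + p_{i+1})$ (the regular subdifferential of $p\mapsto h(x+p)$ at $p_{i+1}$ being $\partial h(x+p_{i+1})$ by translation invariance), whence $w := \frac{1}{\gamma}(p_i - p_{i+1}) + \grad q(p_{i+1}) - \grad q(p_i) \in \partial m(p_{i+1})$.

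Next I would apply the $\lambda$-convexity inequality of Proposition~\ref{p:lmconvex} to $m$ at the point $p_{i+1}$ with subgradient $w$, evaluated at $p_i$: this yields $m(p_i) - m(p_{i+1}) \ge w^\top(p_i - p_{i+1}) + \frac{\lambda}{2}\|p_i - p_{i+1}\|^2$. It then remains to bound $w^\top(p_i - p_{i+1})$ from below. Since $\grad q(p_{i+1}) - \grad q(p_i) = H(p_{i+1} - p_i)$, one expands $w^\top(p_i - p_{i+1}) = \frac{1}{\gamma}\|p_i - p_{i+1}\|^2 - (p_{i+1} - p_i)^\top H(p_{i+1} - p_i)$, and the curvature term obeys $(p_{i+1}-p_i)^\top H(p_{i+1}-p_i) \le \|H\|\,\|p_{i+1}-p_i\|^2 \le L\,\|p_{i+1}-p_i\|^2$ (using $H \preceq \|H\| I$, cf.\ Example~\ref{ex:lm-cvx}, and $\|H\| \le L$). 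Combining and using $p_i - p_{i+1} = u_i - u_{i+1}$ gives $m(p_i) - m(p_{i+1}) \ge (\frac{1}{\gamma} - L + \frac{\lambda}{2})\|u_{i+1}-u_i\|^2$, which is the claim after rewriting $m(p_i) = m(u_i - x)$ and $m(p_{i+1}) = m(u_{i+1}-x)$.

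I do not expect a genuine obstacle: this is the standard sufficient-decrease estimate for proximal gradient descent on a possibly weakly convex composite model. The points needing care are (i) noticing, despite the $H(u_i - x)$ term, that the update is a proximal gradient step on $m$ in the shifted variable $p = u - x$; (ii) turning the proximity-operator optimality condition into an honest subgradient of the composite $m$ via the chain rule (Proposition~\ref{p:chainrule-Mor06}); and (iii) running the estimate through the $\lambda$-convexity of $m$ and bounding the entire quadratic term $v^\top H v \le \|H\|\,\|v\|^2$ rather than only its curvature-free part — this is precisely what produces the stated constant $\frac{1}{\gamma} - L + \frac{\lambda}{2}$ (the descent lemma alone would give the sharper $\frac{1}{\gamma} - \frac{L}{2}$, but the stated form suffices and reuses machinery already developed).
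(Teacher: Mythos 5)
Your proposal is correct and is essentially the paper's own argument: both extract the same subgradient $\frac{1}{\gamma}(u_i-u_{i+1})+H(u_{i+1}-u_i)\in\partial m(u_{i+1}-x)$ from the prox optimality condition (Proposition~\ref{p:prox}) together with the sum rule (Proposition~\ref{p:chainrule-Mor06}), then apply the $\lambda$-convexity inequality (Example~\ref{ex:lm-cvx}, Proposition~\ref{p:lmconvex}) and bound the quadratic term by $\|H\|\le L$. The explicit shift to the variable $p=u-x$ and naming the smooth part $q$ are only cosmetic differences.
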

\begin{proof}
From the iteration $u_{i+1} = \prox_{\gamma h}(u_i-\gamma g - \gamma H (u_i-x))$ and the optimality condition for the proximity operator (Proposition~\ref{p:prox}) we get
\begin{align}
    u_i - u_{i+1} - \gamma g - \gamma H (u_i - x) \in \gamma \partial h(u_{i+1}),
\end{align}
and so
\begin{align}
    \frac{1}{\gamma}(u_i -u_{i+1}) + H (u_{i+1} - u_i)\in g +H (u_{i+1}-x) + \partial h(u_{i+1}) = \partial m(u_{i+1} -x), \label{eq_prox_descent_tmp1}
\end{align}
where we have used Proposition~\ref{p:chainrule-Mor06} in the last equality.
By assumption, $m$ is $\lambda$-convex (see~Example~\ref{ex:lm-cvx}) and so from Proposition~\ref{p:lmconvex} we have
\begin{align}
    m(u_i-x) - m(u_{i+1} - x) &\geq \left(\frac{1}{\gamma}(u_i -u_{i+1}) +H(u_{i+1} -u_i)\right)^{\top} \left(u_i -u_{i+1}\right)\nonumber\\
    &\qquad\qquad +\frac{\lambda}{2}\|(u_i -x) -(u_{i+1} -x)\|^2 \\
    &=\frac{1}{\gamma}\|u_{i+1} -u_i\|^2 -(u_{i+1} -u_i)^\top H(u_{i+1} -u_i) +\frac{\lambda}{2}\|u_{i+1} -u_i\|^2 \\
    &\geq \left(\frac{1}{\gamma} -L +\frac{\lambda}{2}\right)\|u_{i+1} -u_i\|^2,
\end{align}
which completes the proof.
\end{proof}

We can now show that Algorithm~\ref{alg_ppg} achieves sufficient decrease in the model.

\begin{theorem} \label{p:decrease}
    Suppose $h$ satisfies Assumption~\ref{ass_smoothness}\ref{ass_smoothness_h}, $\|H\| \leq L$ and $\lambda := \lambda_{\min}(H)$. 
    Then the output $p^*$ of Algorithm~\ref{alg_ppg} satisfies
    \begin{align}
        m(0) - m(p^*) \geq \theta \|u_1 - x\| \min\left\{\Delta, \|u_1 - x\|\right\},
    \end{align}
    where
    \begin{align}
        \theta := \frac{1}{\gamma} - L + \frac{\lambda}{2} + \min\left\{0,\frac{\lambda R(\gamma L)^2}{2}\right\},
    \end{align}
    and where for any $t>0$ we define $R(t) := \sum_{i=0}^{N-1} (1+t)^i = \frac{(1+t)^N-1}{t}>1$.
\end{theorem}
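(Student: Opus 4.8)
The plan is to reduce the statement to three ingredients that are already available: (i) the per-iteration decrease of Lemma~\ref{l:prox descent}, summed by telescoping; (ii) nonexpansiveness of $\prox_{\gamma h}$ (Proposition~\ref{p:prox}(ii)), to control how far the inner iterates $u_i$ can drift from $x$; and (iii) $\lambda$-convexity of the model $m$ (Example~\ref{ex:lm-cvx}), to bound the effect of the final projection. Throughout, let $n\in\{1,\dots,N\}$ be the number of inner iterations actually performed, so that Algorithm~\ref{alg_ppg} produces $u_0=x,u_1,\dots,u_n$ and returns $p^*=\alpha(u_n-x)$ with $\alpha:=\Delta/\max\{\Delta,\|u_n-x\|\}\in(0,1]$, and write $\ell:=\|u_1-x\|$.

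\textbf{Key steps.} First, summing Lemma~\ref{l:prox descent} over $i=0,\dots,n-1$ gives
\[
m(0)-m(u_n-x)\ \geq\ \Big(\tfrac{1}{\gamma}-L+\tfrac{\lambda}{2}\Big)\sum_{i=0}^{n-1}\|u_{i+1}-u_i\|^2,
\]
and since the $i=0$ term is exactly $\ell^2$ the sum lies between $\ell^2$ and (by the next step) $R(\gamma L)^2\ell^2$. Second, since $\prox_{\gamma h}$ is nonexpansive and the eigenvalues of $I-\gamma H$ lie in $[1-\gamma L,1+\gamma L]$, we get $\|u_{i+1}-u_i\|\leq(1+\gamma L)\|u_i-u_{i-1}\|$, hence $\|u_{i+1}-u_i\|\leq(1+\gamma L)^i\ell$; summing and using $n\leq N$ yields $\|u_n-x\|\leq R(\gamma L)\,\ell$ and $\sum_{i=0}^{n-1}\|u_{i+1}-u_i\|^2\leq R(\gamma L)^2\ell^2$. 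Third, writing $p^*=(1-\alpha)\cdot 0+\alpha(u_n-x)$ and applying the $\lambda$-convexity inequality to $m$,
\[
m(0)-m(p^*)\ \geq\ \alpha\big(m(0)-m(u_n-x)\big)+\tfrac{\lambda}{2}\,\alpha(1-\alpha)\|u_n-x\|^2,
\]
and the last term is $\geq 0$ when $\lambda\geq 0$, while for $\lambda<0$ it is $\geq\alpha\cdot\tfrac{\lambda R(\gamma L)^2}{2}\ell^2$ (using $\alpha(1-\alpha)\leq\alpha$ and $\|u_n-x\|^2\leq R(\gamma L)^2\ell^2$); in all cases it is $\geq\alpha\min\{0,\tfrac{\lambda R(\gamma L)^2}{2}\}\ell^2$. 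Combining these and splitting into the case $\|u_n-x\|\leq\Delta$ (so $\alpha=1$, $p^*=u_n-x$, and the bound follows from $\ell\geq\min\{\Delta,\ell\}$) versus $\|u_n-x\|>\Delta$ (so $\alpha\|u_n-x\|=\Delta$, which is where the factor $\min\{\Delta,\|u_1-x\|\}$ enters, again after invoking $\|u_n-x\|\leq R(\gamma L)\ell$) should deliver the claimed inequality with the stated $\theta$.

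\textbf{Main obstacle.} The delicate part is precisely the interaction between the final projection and weak convexity ($\lambda<0$): scaling $u_n-x$ down to $p^*$ can \emph{increase} the model, and this increase must be paid for out of the proximal-gradient descent accumulated along $u_0,\dots,u_n$. The crucial quantitative input is the drift bound $\|u_n-x\|\leq R(\gamma L)\ell$, which is exactly what forces the factor $R(\gamma L)^2$ to appear multiplying $\lambda$ in $\theta$. One must be careful in the projection case to keep enough of the quadratic/convex structure of $m$ along the segment $[0,u_n-x]$ — rather than only the crude chord inequality — so that the $\alpha<1$ deficit is absorbed; tracking this tightly (and the book-keeping between the cases $\ell\leq\Delta$ and $\ell>\Delta$) is where the real work lies, whereas the telescoping and nonexpansiveness estimates are routine.
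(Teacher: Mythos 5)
Your outline follows the paper's proof essentially step for step: telescoping Lemma~\ref{l:prox descent} over the inner iterations, the nonexpansiveness/drift bound $\|u_{i+1}-u_i\|\leq(1+\gamma L)^i\|u_1-x\|$ giving $\|u_n-x\|\leq R(\gamma L)\|u_1-x\|$, and the $\lambda$-convexity chord inequality $m(0)-m(p^*)\geq\alpha\bigl(m(0)-m(u_n-x)\bigr)+\tfrac{\lambda}{2}\alpha(1-\alpha)\|u_n-x\|^2$ with $\alpha=\Delta/\max\{\Delta,\|u_n-x\|\}$, leading to $m(0)-m(p^*)\geq\alpha\theta\|u_1-x\|^2$. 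Up to that point the proposal and the paper coincide (your upper bound $\sum_i\|u_{i+1}-u_i\|^2\leq R(\gamma L)^2\|u_1-x\|^2$ is not actually needed; only the drift bound on $\|u_n-x\|$ is used, to absorb the $\lambda<0$ chord term into $\theta$).

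The genuine gap is the final step, which you yourself flag as the "main obstacle" but then wave through. In the case $\|u_n-x\|>\Delta$ you have $\alpha=\Delta/\|u_n-x\|$, and your plan (use $\alpha\|u_n-x\|=\Delta$ together with $\|u_n-x\|\leq R(\gamma L)\|u_1-x\|$) only gives $\alpha\|u_1-x\|^2\geq\Delta\|u_1-x\|/R(\gamma L)$, i.e.\ the conclusion with $\theta$ replaced by $\theta/R(\gamma L)$, not "the claimed inequality with the stated $\theta$". The needed inequality is $\alpha\|u_1-x\|\geq\min\{\Delta,\|u_1-x\|\}$, and since $\alpha\|u_1-x\|=\Delta\|u_1-x\|/\|u_n-x\|$, this fails whenever $\|u_n-x\|>\max\{\Delta,\|u_1-x\|\}$, a situation the algorithm permits (the loop only stops once $\|u_i-x\|>\mu_u\Delta$, and there is no guarantee that $\|u_n-x\|\leq\|u_1-x\|$). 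For comparison, the paper closes this step by splitting on $\|u_1-x\|\lessgtr\Delta$ and asserting $\alpha=1$ when $\|u_1-x\|\leq\Delta$, i.e.\ it invokes $\alpha\|u_1-x\|\geq\min\{\Delta,\|u_1-x\|\}$ directly — an inequality that is immediate only when $\|u_n-x\|\leq\Delta$, since $\alpha$ is defined through $u_n$, not $u_1$. So your instinct about where the difficulty sits is exactly right, but your proposal does not supply the missing estimate, and with the ingredients you list (chord inequality plus drift bound) the bad case can only be closed at the price of weakening $\theta$ by a factor $R(\gamma L)$; proving the statement with the stated constant requires an additional argument beyond what you (and, at this point, the paper's own wording) provide.
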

\begin{proof}
    Suppose the loop in Algorithm~\ref{alg_ppg} terminates after $n\in\{1,\dots,N\}$ iterations (note that the termination conditions ensure at least one loop iteration is run).

    Since $\prox_{\gamma h}$ is nonexpansive (Proposition~\ref{p:prox}), for any $i\in\{0,\dots,n-2\}$,
    \begin{align}
        \|u_{i+2} - u_{i+1}\| \leq \|u_{i+1} - u_i - \gamma H(u_{i+1} - u_i)\| \leq (1+\gamma L) \|u_{i+1}-u_i\|, \label{eq_suff_decrease_tmp0}
    \end{align}
    and so by induction we have $\|u_{i+1}-u_i\| \leq (1+\gamma L)^i \|u_1-u_0\|$ for all $i\in\{0,\dots,n-1\}$.
    By definition of $R(t)$, this gives us
    \begin{align}
        \|u_n - x\| = \|u_n - u_0\| \leq \sum_{i=0}^{n-1} \|u_{i+1} - u_i\| \leq R(\gamma L) \|u_1-u_0\| = R(\gamma L) \|u_1-x\|, \label{eq_suff_decrease_tmp0a}
    \end{align}
    where the second inequality uses $n\leq N$.
    Next, we apply Lemma~\ref{l:prox descent} to obtain
    \begin{align}
        m(0) - m(u_n-x) &= \sum_{i=0}^{n-1} m(u_i-x) - m(u_{i+1}-x), \\
        &\geq \left(\frac{1}{\gamma} - L + \frac{\lambda}{2}\right) \sum_{i=0}^{n-1} \|u_{i+1}-u_i\|^2, \\
        &\geq \left(\frac{1}{\gamma} - L + \frac{\lambda}{2}\right) \|u_1-x\|^2, \label{eq_suff_decrease_tmp1}
    \end{align}
    where the last line uses $n\geq 1$, yielding $\sum_{i=0}^{n-1} \|u_{i+1}-u_i\|^2 \geq \|u_1-u_0\|^2 = \|u_1-x\|^2$.

    Now, define $\alpha := \frac{\Delta}{\max\{\Delta,\|u_n-x\|\}} \in (0,1]$, so the output of Algorithm~\ref{alg_ppg} is $p^* = \alpha (u_n-x)$.
    Writing $p^* = (1-\alpha) 0 + \alpha(u_n-x)$ and using the $\lambda$-convexity of $m$ (Proposition~\ref{p:lmconvex}) we get
    \begin{align}
        (1-\alpha) m(0) + \alpha m(u_n-x) \geq m(p^*) + \frac{\lambda}{2} \alpha(1-\alpha) \|u_n-x\|^2,
    \end{align}
    and so from \eqref{eq_suff_decrease_tmp1} we have
    \begin{align}
        m(0) - m(p^*) &\geq \alpha (m(0) - m(u_n-x)) + \frac{\lambda}{2} \alpha(1-\alpha) \|u_n-x\|^2, \\
        &\geq \alpha \left(\frac{1}{\gamma} - L + \frac{\lambda}{2}\right) \|u_1-x\|^2 + \frac{\lambda}{2}\alpha (1-\alpha)\|u_n-x\|^2.
    \end{align}
    If $\lambda \geq 0$ then we use $\alpha \leq 1$ to conclude $\frac{\lambda(1-\alpha)}{2} \geq 0$, and if $\lambda<0$ then we use $\alpha>0$ to get $\frac{\lambda(1-\alpha)}{2} > \frac{\lambda}{2}$.
    Hence
    \begin{align}
        \frac{\lambda (1-\alpha)}{2} \geq \min\left\{0,\frac{\lambda}{2}\right\},
    \end{align}
    and so
    \begin{align}
        m(0) - m(p^*) \geq \alpha \left(\frac{1}{\gamma} - L + \frac{\lambda}{2}\right) \|u_1-x\|^2 + \alpha \min\left\{0,\frac{\lambda}{2}\right\} \|u_n-x\|^2.
    \end{align}
    We then apply \eqref{eq_suff_decrease_tmp0a} to get
    \begin{align}
        m(0) - m(p^*) &\geq \alpha \left(\frac{1}{\gamma} - L + \frac{\lambda}{2}\right) \|u_1-x\|^2\nonumber\\
        &\qquad + \alpha \min\left\{0,\frac{\lambda}{2}\right\} R(\gamma L)^2 \|u_1-x\|^2 = \alpha \theta \|u_1-x\|^2.
    \end{align}
    Lastly, if $\|u_1-x\| \leq \Delta$ then $\alpha=1$, so $\alpha\|u_1-x\| = \|u_1-x\| = \min\{\Delta, \|u_1-x\|\}$.
    Instead, if $\|u_1-x\| > \Delta$ then $\alpha=\frac{\Delta}{\|u_1-x\|}$ and so $\alpha \|u_1-x\| = \Delta = \min\{\Delta, \|u_1-x\|\}$.
    In either case, we get the desired result.
\end{proof}

From Lemma~\ref{p:decrease}, using the fact that $\lim_{t\to 0} R(t) = N$, we note that $\theta>0$ holds provided we pick the stepsize $\gamma>0$ to be sufficiently small (for any choice of total iterations $N$).
However, in light of Assumption~\ref{ass_unbounded_hess}, the value of $L$ may grow unboundedly over the iterations of the main trust-region algorithm (Algorithm~\ref{alg_nonsmooth_tr}).
The below result shows that regardless of the value of $L$, there is a range of values of $\gamma$ that achieve  sufficient decrease.

\begin{lemma} \label{lem_good_gamma}
    Suppose $\|H\| \leq L$ and $g = \grad f(x)$, and we run Algorithm~\ref{alg_ppg} with at most $N\geq 1$ iterations with stepsize $\gamma>0$.
    Then there exists $c^*>0$ and $\kappa_s>0$ such that, for all $\gamma\in [c^*/(10L), c^*/L]$,
    \begin{align}
        m(0) - m(p^*) \geq \kappa_s \pi(x,\gamma) \min\left\{\Delta, \frac{\pi(x,\gamma)}{L}\right\}. \label{eq_suff_decrease}
    \end{align}
    The constants $c^*$ and $\kappa_s$ depend on $N$, but not $\gamma$ or $\mu_u$.
\end{lemma}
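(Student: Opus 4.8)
The plan is to push the conclusion of Lemma~\ref{p:decrease} through the definition of $\pi(x,\gamma)$ and then choose the stepsize window $[c^*/(10L),\,c^*/L]$ so that the constant $\theta$ from that lemma becomes a fixed positive multiple of $L$, which exactly cancels the $1/L$ that $\gamma$ introduces. Concretely, I would first note that Algorithm~\ref{alg_ppg} starts at $u_0=x$ with $g=\grad f(x)$, so the first inner iterate is $u_1=\prox_{\gamma h}(x-\gamma\grad f(x))$ and hence $\|u_1-x\|=\gamma\,\pi(x,\gamma)$ by \eqref{e:pigamma}. Since the hypotheses of Lemma~\ref{p:decrease} are in force ($h$ convex by Assumption~\ref{ass_smoothness}\ref{ass_smoothness_h}, $\|H\|\le L$, $\lambda=\lambda_{\min}(H)$), it gives
\[
m(0)-m(p^*)\ \ge\ \theta\,\gamma\,\pi(x,\gamma)\,\min\{\Delta,\gamma\,\pi(x,\gamma)\},\qquad \theta=\tfrac1\gamma-L+\tfrac\lambda2+\min\Bigl\{0,\tfrac{\lambda R(\gamma L)^2}{2}\Bigr\}.
\]

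The key step is to control $\theta$. I would set $c:=\gamma L$, so that $1/\gamma=L/c$ and $R(\gamma L)=R(c)$ depends only on $c$ and $N$, and use $|\lambda|\le\|H\|\le L$ (so $\tfrac\lambda2\ge-\tfrac L2$ and $\min\{0,\tfrac{\lambda R(c)^2}{2}\}\ge-\tfrac{LR(c)^2}{2}$, whether $\lambda$ is positive or negative) to obtain
\[
\theta\ \ge\ L\,\delta(c),\qquad \delta(c):=\tfrac1c-\tfrac32-\tfrac{R(c)^2}{2}.
\]
Because $R(c)=\frac{(1+c)^N-1}{c}\to N$ as $c\to0^+$, we have $\delta(c)\to+\infty$; I would fix $c^*\in(0,1)$, depending only on $N$, with $\delta(c^*)>0$. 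Since $1/c$ is decreasing and $R(c)$ is increasing, $\delta(c)\ge\delta(c^*)>0$ for all $c\le c^*$, so $\theta\ge L\delta(c^*)>0$ throughout the window $\gamma\in[c^*/(10L),c^*/L]$.

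To assemble the bound, I would use the elementary estimate $\min\{\Delta,\tfrac cL\pi(x,\gamma)\}\ge\min\{1,c\}\,\min\{\Delta,\tfrac{\pi(x,\gamma)}{L}\}$ (valid for $c>0$), which reduces to $c\,\min\{\Delta,\tfrac{\pi(x,\gamma)}{L}\}$ since $c\le c^*<1$; combined with $\theta\gamma=\theta c/L\ge\delta(c)\,c$ (here the factor $L$ cancels) this gives
\[
m(0)-m(p^*)\ \ge\ \delta(c)\,c^2\;\pi(x,\gamma)\,\min\Bigl\{\Delta,\tfrac{\pi(x,\gamma)}{L}\Bigr\}\ \ge\ \delta(c^*)\Bigl(\tfrac{c^*}{10}\Bigr)^2\;\pi(x,\gamma)\,\min\Bigl\{\Delta,\tfrac{\pi(x,\gamma)}{L}\Bigr\}
\]
for every $c\in[c^*/10,c^*]$, i.e.\ \eqref{eq_suff_decrease} holds with $\kappa_s:=\delta(c^*)(c^*/10)^2>0$. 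Both $c^*$ and $\kappa_s$ depend only on $N$, and $\mu_u$ never enters the estimate.

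The main obstacle is precisely that $L$ may be arbitrarily large, so at first sight $\theta$ could be non-positive; the resolution is that the term $1/\gamma=L/c$ in $\theta$ also scales linearly in $L$, and for $\gamma\le c^*/L$ with $c^*$ small it dominates the remaining $O(L)$ terms, forcing $\theta\ge\delta(c^*)L$ — exactly the growth needed to absorb the $1/L$ carried by $\gamma\,\pi(x,\gamma)$ and thereby make $\kappa_s$ independent of $L$.
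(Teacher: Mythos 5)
Your proposal is correct and follows essentially the same route as the paper: apply Theorem~\ref{p:decrease} with $\|u_1-x\|=\gamma\,\pi(x,\gamma)$ and the conservative bound $\lambda\ge -L$, substitute $c=\gamma L$ so the $L$-dependence cancels, and choose $c^*$ small (depending only on $N$) so the resulting coefficient stays positive on $[c^*/10,c^*]$. The only differences are cosmetic — you argue via monotonicity of $\delta(c)$ where the paper bounds $R(c)\le N(1+c)^{N-1}$ and forces the bracket to be at least $1/2$, yielding slightly different but equally valid constants $\kappa_s$.
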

\begin{proof}
    From Lemma~\ref{p:decrease} with the conservative bound $\lambda=-L$ and $\|u_1-x\|=\gamma \pi(x,\gamma)$ by the definition \eqref{e:pigamma}, we have
    \begin{align}
        m(0) - m(p^*) \geq \left(1 - \frac{3\gamma L}{2} - \frac{\gamma L R(\gamma L)^2}{2}\right) \pi(x,\gamma) \min\left\{\Delta, (\gamma L)\frac{\pi(x,\gamma)}{L}\right\}.
    \end{align}
    Now suppose that $\gamma = c/L$ for some $0<c<1$.
    Then
    \begin{align}
        m(0) - m(p^*) &\geq \left(1 - \frac{3c}{2} - \frac{c R(c)^2}{2}\right)  \pi(x,\gamma) \min\{1,c\} \min\left\{\Delta, \frac{\pi(x,\gamma)}{L}\right\}\nonumber\\
        &=c \left(1 - \frac{3c}{2} - \frac{c R(c)^2}{2}\right)  \pi(x,\gamma) \min\left\{\Delta, \frac{\pi(x,\gamma)}{L}\right\}
    \end{align}
    Furthermore, since $R(c) = \sum_{i=0}^{N-1} (1+c)^i \leq N (1+c)^{N-1}$, a sufficient condition for the result to hold is therefore to choose $\kappa_s$ such that
    \begin{align}
        c \left(1 - \frac{3c}{2} - \frac{c [N(1+c)^{N-1}]^2}{2}\right) \geq \kappa_s > 0. \label{eq_good_gamma_tmp1}
    \end{align}
    Since the factor inside the brackets approaches 1 as $c\to 0^{+}$, there exists $c^*$ such that, for all $c\in (0,c^*]$,
    \begin{align}
        1 - \frac{3c}{2} - \frac{c [N(1+c)^{N-1}]^2}{2} \geq \frac{1}{2}.
        \label{eq_good_gamma_tmp2}
    \end{align}
    Thus \eqref{eq_good_gamma_tmp1} and hence the final result holds with, for example, $\kappa_s = \frac{c^*}{20}$ and any $c\in [c^*/10, c^*]$.
\end{proof}

Our final result confirms that the sufficient decrease requirement Assumption~\ref{ass_cauchy_decrease} can be achieved by running Algorithm~\ref{alg_ppg} with decreasing choices of $\gamma$, regardless of the size of $\|H\|$.

\begin{corollary} \label{cor_ppg_multipleN}
    Suppose the assumptions of Lemma~\ref{lem_good_gamma} hold.
    If we run Algorithm~\ref{alg_ppg} with stepsize choices $\gamma_j = \alpha^j \gamma_0$ for some $\alpha \in (0.1, 1)$ and $j=0,1,2,\dots$, with $\gamma_0 \leq \gamma_{\max}$, then there is a finite $j$ (possibly dependent on $L$ and $N$) such that
    \begin{align}
        m(0) - m(p^*) \geq \kappa_s \pi(x,\gamma_{\max}) \min\left\{\Delta, \frac{\pi(x,\gamma_{\max})}{L}\right\},
    \end{align}
    for some $\kappa_s>0$ depending on $N$ but independent of $L$.
\end{corollary}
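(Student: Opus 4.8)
The plan is to derive the claim directly from Lemma~\ref{lem_good_gamma}. That lemma supplies constants $c^*>0$ and $\kappa_s>0$, depending only on $N$, such that if Algorithm~\ref{alg_ppg} is run with \emph{any} fixed stepsize $\gamma$ lying in the $L$-dependent window $[c^*/(10L),\,c^*/L]$, then the output satisfies \eqref{eq_suff_decrease}, i.e.\ $m(0)-m(p^*)\ge\kappa_s\,\pi(x,\gamma)\min\{\Delta,\pi(x,\gamma)/L\}$. So the proof will have two ingredients: (i) show that the geometric stepsize schedule $\gamma_j=\alpha^j\gamma_0$ lands in this window for some finite index $j$; and (ii) upgrade the resulting bound, which is phrased in terms of $\pi(x,\gamma_j)$, to one phrased in terms of $\pi(x,\gamma_{\max})$.

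For (i) I would argue as follows. The sequence $\{\gamma_j\}$ is strictly decreasing with $\gamma_j\to 0$, its consecutive ratio is $\gamma_{j+1}/\gamma_j=\alpha>1/10$, and the target window $[c^*/(10L),c^*/L]$ has its right endpoint exactly ten times its left endpoint. Hence the multiplicative gap between successive $\gamma_j$ is strictly less than the multiplicative width of the window, so the schedule cannot jump over it: let $j$ be the first index with $\gamma_j\le c^*/L$ (finite since $\gamma_j\to 0$, with the schedule starting at or above the window as $\gamma_0$ is taken not too small relative to $1/L$); then $\gamma_{j-1}>c^*/L$, so $\gamma_j=\alpha\gamma_{j-1}>\tfrac{1}{10}\cdot\tfrac{c^*}{L}=\tfrac{c^*}{10L}$, giving $\gamma_j\in[c^*/(10L),c^*/L]$. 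Applying Lemma~\ref{lem_good_gamma} with this $\gamma_j$ then yields $m(0)-m(p^*)\ge\kappa_s\,\pi(x,\gamma_j)\min\{\Delta,\pi(x,\gamma_j)/L\}$.

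For (ii), since $\alpha\in(0.1,1)$ we have $\gamma_j=\alpha^j\gamma_0\le\gamma_0\le\gamma_{\max}$, so Proposition~\ref{p:pi_properties} gives $\pi(x,\gamma_j)\ge\pi(x,\gamma_{\max})$, and therefore also $\min\{\Delta,\pi(x,\gamma_j)/L\}\ge\min\{\Delta,\pi(x,\gamma_{\max})/L\}$. Substituting both of these into the bound from (i) only decreases the right-hand side, which gives exactly the desired estimate with the same $\kappa_s$ as in Lemma~\ref{lem_good_gamma}. The main obstacle is step (i): it is crucial that the contraction factor $\alpha$ exceeds $0.1$, precisely matching the factor-of-ten width of the admissible stepsize window coming from Lemma~\ref{lem_good_gamma}, since otherwise the geometric schedule could skip the window entirely and no admissible $\gamma_j$ would be produced. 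Step (ii) is then immediate, as $\pi(x,\cdot)$ is non-increasing in its second argument.
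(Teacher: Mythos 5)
Your proposal is correct and follows essentially the same route as the paper's proof: invoke Lemma~\ref{lem_good_gamma}, note that since $\alpha>0.1$ the geometric schedule cannot skip the multiplicative-width-ten window $[c^*/(10L),c^*/L]$, and then use the monotonicity of $\pi(x,\cdot)$ from Proposition~\ref{p:pi_properties} with $\gamma_j\leq\gamma_0\leq\gamma_{\max}$. Your step (i) merely spells out in more detail (including the implicit requirement that $\gamma_0$ not start below the window) what the paper states in one sentence.
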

\begin{proof}
    Since $\alpha\in (0.1, 1)$, there must be at least one $j$ for which $\gamma_j \in [c^*/(10L), c^*/L]$, and so Lemma~\ref{lem_good_gamma} holds.
    The result follows from $\pi(x,\gamma_j) \geq \pi(x,\gamma_0) \geq \pi(x,\gamma_{\max})$ (Proposition~\ref{p:pi_properties} with $\gamma_j \leq \gamma_0 \leq \gamma_{\max}$). 
\end{proof}

\begin{remark} \label{rem_practical_backtracking}
In our numerical experiments, follow the approach in Corollary~\ref{cor_ppg_multipleN}, where we accept the current choice of $\gamma_j$ if all computed iterates $u_i$ of Algorithm~\ref{alg_ppg} satisfy $m(0)-m(u_i) > 0$ and $m(0)-m(p^*) > 0$.
For the backtracking, we use the heuristic choice $\gamma_0 = \frac{2\|g_0\|}{3\|H_0 g_0\|}$, where $g_0$ and $H_0$ are the model gradient and Hessian at the first iteration.
This choice arises from setting $\frac{1}{\gamma_0} - L + \frac{\lambda}{2} = 0$ (compare with Lemma~\ref{l:prox descent}) under the conservative assumption $\lambda=-L$, and where $L=\|H_0\|$ is approximated by one iteration of the power method, $\|H_0\| \geq \|H_0 g_0\| / \|g_0\|$.
The successful value $\gamma_j$ in one iteration of Algorithm~\ref{alg_nonsmooth_tr} is then taken to be the value of $\gamma_0$ in the next iteration.
We do not change the value of $N$ across iterations of Algorithm~\ref{alg_nonsmooth_tr}.
\end{remark}

\section{Numerical Experiments} \label{sec_numerics}

In our experiments we compare Algorithm~\ref{alg_nonsmooth_tr} with different choices of subproblem solver.
Our implemented version of Algorithm~\ref{alg_nonsmooth_tr} uses $\Delta_0=1$ and a slightly more sophisticated mechanism for choosing $x_{k+1}$ and $\Delta_{k+1}$, similar to \cite[Algorithm 4.1]{nocedal2006numerical}.
Specifically, we set $x_{k+1}=x_k+p_k$ if $\rho_k \geq 10^{-3}$ and $x_{k+1}=x_k$ otherwise, and 
\begin{align}
    \Delta_{k+1} = \begin{cases} \min\{2\Delta_k, 10^{10}\}, & \text{$\rho_k \geq 0.75$ and $\|p_k\| \geq (1-10^{-5})\Delta_k$,} \\ 0.5\Delta_k, & \rho_k < 0.25, \\ \Delta_k, & \text{otherwise}. \end{cases}
\end{align}
This modification does not affect the complexity theory developed in Section~\ref{sec_complexity}, and was chosen for practicality.
The two choices of subproblem solver we compare for this version of Algorithm~\ref{alg_nonsmooth_tr} are: 
\begin{itemize}
    \item PPG: the projected proximal gradient method developed in Algorithm~\ref{alg_ppg}; and
    \item SPG: the spectral proximal gradient method given in \cite[Algorithm 3]{Baraldi2023a}.
\end{itemize}
The SPG method was chosen for comparison as it was the top-performer among the 5 subproblem solvers compared in \cite{Baraldi2023a} (excluding one method that specifically exploits an $L^1$ regularization structure rather than just using $\prox_{\gamma h}$).
Both methods use comparable problem information: they interact with the model Hessian $H_k$ only through Hessian-vector products, and with the nonsmooth term $h$ through only evaluations of $h$ and its proximity operator.
For PPG we use $\mu_u=2$ and $\alpha=0.9$ ($\alpha$ is used for backtracking; see Corollary~\ref{cor_ppg_multipleN} and Remark~\ref{rem_practical_backtracking}) and for SPG we set $t_{\min}=10^{-12}$, $t_{\max}=10^{12}$, $\overline{\tau}=10^{-5}$, $t_0=1$ and $\tau_k=10^{-3} h_k$ (with all values except $t_0$ taken from \cite{Baraldi2023a}).
We consider the impact of varying the maximum iterations $N$ for both PPG and SPG subproblem solvers, and show results for $N\in\{15, 30, 50\}$, where $N=15$ is the value used in \cite{Baraldi2023a}.

For our test problems \eqref{eq_composite}, we use $h(x)=\|x\|_1$ and take $f$ to be from a collection of 154 unconstrained CUTEst problems \cite{Gould2015,Fowkes2022} with dimension $d\in[2,50]$ based on \cite[Appendix A.1]{Ragonneau2022}.
These problems are listed in Appendix~\ref{app_cutest_problems}.
For all problems we use the exact Hessian $H_k=\grad^2 f(x_k)$ to build our model \eqref{eq_model}.
We run all test problems until we reach first-order optimality level $\pi_k(1) = \pi(x_k,1)\leq 10^{-6}$, capped at $10^4$ iterations of Algorithm~\ref{alg_nonsmooth_tr}.

To compare our (subproblem) solvers, we measure the number of iterations taken to achieve $\pi_k(1) \leq \tau$ for the first time, for some accuracy level $\tau \ll 1$.
We plot both data \cite{More2009} and performance profiles \cite{Dolan2002}.
For solvers $S\in \mathcal{S}$ and problems $P\in \mathcal{P}$, let $K(S,P,\tau)$ be the first iteration with $\pi_k(1) \leq \tau$ (with $K=+\infty$ if this never occurs).
For a given solver $S$ and accuracy level $\tau$, data profiles measure the proportion of problems solved after some fixed number of iterations,
\begin{align}
    d_{S,\tau}(\alpha) = \frac{1}{|\mathcal{P}|} |\{P : K(S,P,\tau) \leq \alpha \}|, \qquad \forall \alpha \geq 0,
\end{align}
and performance profiles measure the proportion of problems solved within some ratio of the fastest solver for that problem,
\begin{align}
    p_{S,\tau}(\alpha) = \frac{1}{|\mathcal{P}|} |\{P : K(S,P,\tau) \leq \alpha \min_{S'\in\mathcal{S}} K(S',P,\tau) \}|, \qquad \forall \alpha \geq 1.
\end{align}
Our results compare PPG and SPG subproblem solvers with $N\in\{15,30,50\}$ for accuracy levels $\tau\in\{10^{-3}, 10^{-6}\}$, with data and performance profiles given in Figure~\ref{fig_main_results}.

\begin{figure}[tb]
  \centering
  \begin{subfigure}[b]{0.48\textwidth}
    \includegraphics[width=\textwidth]{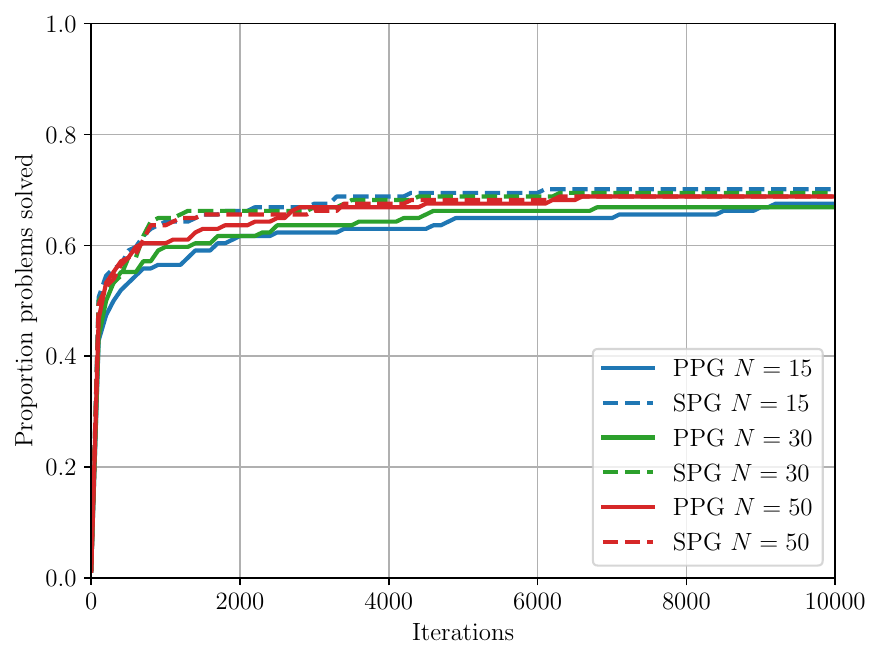}
    \caption{Data profile, $\tau=10^{-3}$}
  \end{subfigure}
  ~
  \begin{subfigure}[b]{0.48\textwidth}
    \includegraphics[width=\textwidth]{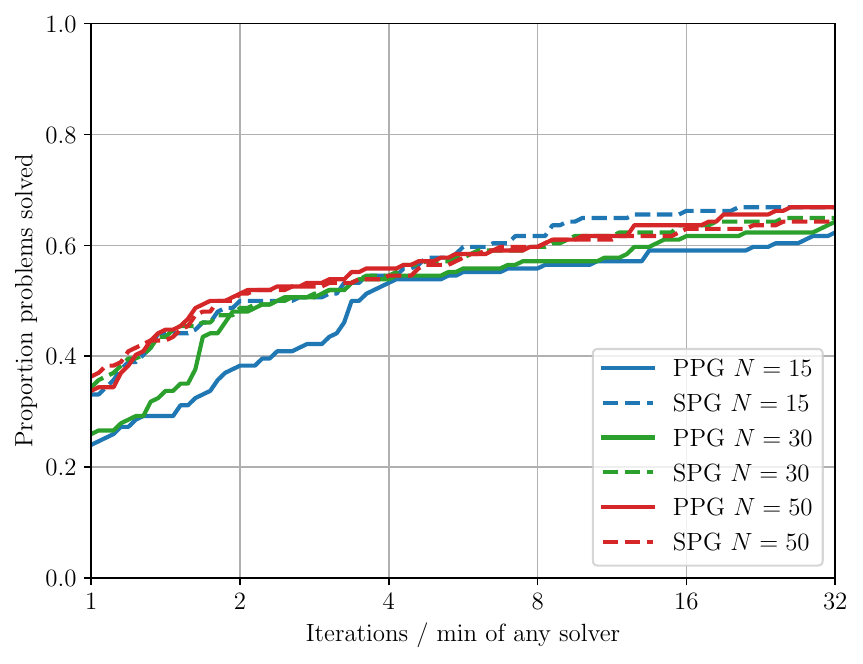}
    \caption{Performance profile, $\tau=10^{-3}$}
  \end{subfigure}
  \\
  \begin{subfigure}[b]{0.48\textwidth}
    \includegraphics[width=\textwidth]{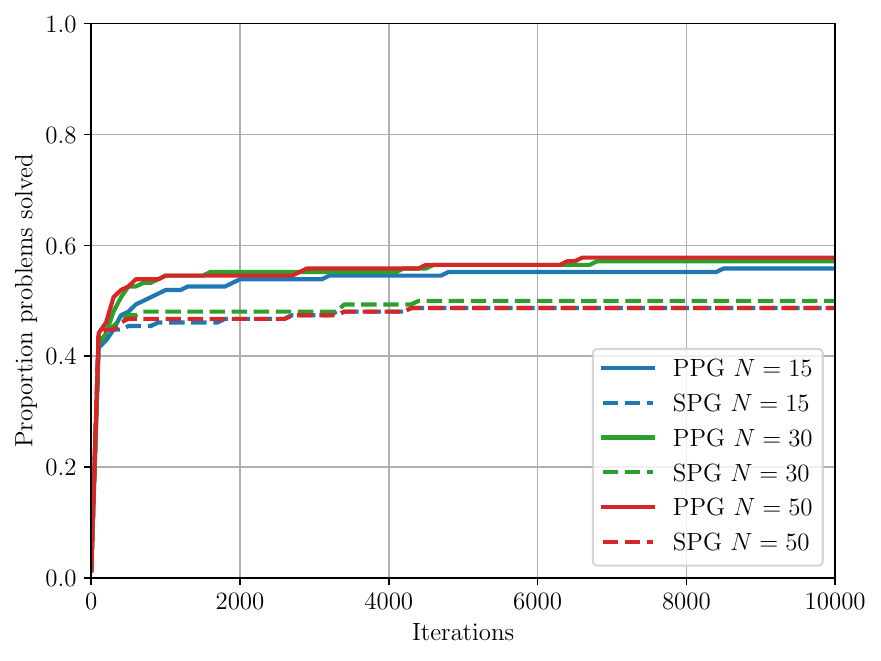}
    \caption{Data profile, $\tau=10^{-6}$}
  \end{subfigure}
  ~
  \begin{subfigure}[b]{0.48\textwidth}
    \includegraphics[width=\textwidth]{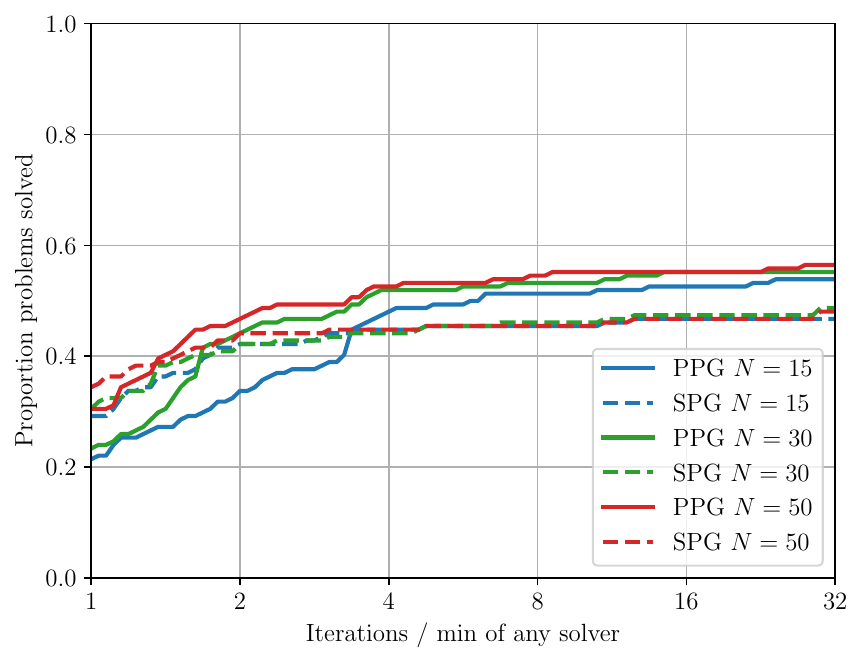}
    \caption{Performance profile, $\tau=10^{-6}$}
  \end{subfigure}
  \caption{Comparison of PPG (Algorithm~\ref{alg_ppg}) and SPG \cite{Baraldi2023a} subproblem solvers with increasing maximum iterations $N\in\{15,30,50\}$ in the trust-region method Algorithm~\ref{alg_nonsmooth_tr}.}
  \label{fig_main_results}
\end{figure}

For low accuracy solutions $\tau=10^{-3}$, the three SPG subproblem solvers outperform the PPG subproblem solvers, with the PPG solvers improving in performance with larger values of $N$.
The best PPG variant, $N=50$, has comparable performance to the SPG solvers.

However, for high accuracy solutions $\tau=10^{-6}$, all the PPG variants notably outperform all the SPG variants.
We also note that increasing the value of $N$ for PPG continues to improve the performance, where increasing $N$ for SPG has limited benefit.

Thus, our results suggest that PPG with larger $N$ is a more effective subproblem solver than PPG with smaller $N$, and that PPG can outperform SPG when higher accuracy solutions are desired.

\section{Conclusion and Future Work}

We extended the theoretical analysis of the nonsmooth trust-region method from \cite{Baraldi2023} to prove worst-case complexity bounds in the case of unbounded model Hessians, and showed results that match the smooth case \cite{Diouane2024}.
We also introduced the projected proximal gradient nonsmooth trust-region subproblem solver, a simple method that demonstrates good numerical performance, particularly when high-accuracy solutions of the main problem \eqref{eq_composite} are desired.
Potential directions for future work include extending our results to the derivative-free case (i.e.~where $\grad f$) is not available, and handling more complicated $h$ via inexact proximity operator evaluations.

\subsection*{Acknowledgments}
This research was initiated during HMP's visit to the Sydney Mathematical Research Institute (SMRI), Australia in May 2024 and HMP and MND's visit to the Vietnam Institute for Advanced Study in Mathematics (VIASM), Vietnam in June 2024, for which HMP and MND are grateful for the generous support and hospitality of SMRI and VIASM. MND is partially supported by the Australian Research Council Discovery Project DP230101749. 
LR is supported by the Australian Research Council Discovery Early Career Award DE240100006.

\bibliographystyle{alpha}
\bibliography{refs}

\appendix

\clearpage

\section{List of Test Problems} \label{app_cutest_problems}
The following table contains a list of the 154 CUTEst problems \cite{Gould2015,Fowkes2022} used for the numerical experiments in Section~\ref{sec_numerics} and their dimension $d\in[2,50]$, based on the collection from \cite[Appendix A.1]{Ragonneau2022}.
Any values in brackets after the problem name are the  optional problem parameters used.

\begin{table}[H]
    \centering
    {\scriptsize
    \begin{tabular}{cc|cc|cc}
        Name (params) & $d$ & Name (params) & $d$ & Name (params) & $d$ \\ \hline
AKIVA & 2 & FREUROTH ($N=10$) & 10 & PALMER2C & 8 \\
ALLINITU & 4 & GAUSSIAN & 3 & PALMER3C & 8 \\
ARGLINA ($N=10$) & 10 & GBRAINLS & 2 & PALMER4C & 8 \\
ARGLINB ($N=10$) & 10 & GENHUMPS ($N=5$) & 5 & PALMER5C & 6 \\
ARGLINC ($N=10$) & 10 & GENROSE ($N=5$) & 5 & PALMER5D & 4 \\
ARGTRIGLS ($N=10$) & 10 & GROWTHLS & 3 & PALMER6C & 8 \\
BARD & 3 & GULF & 3 & PALMER7C & 8 \\
BEALE & 2 & HAHN1LS & 7 & PALMER8C & 8 \\
BENNETT5LS & 3 & HAIRY & 2 & PENALTY1 ($N=10$) & 10 \\
BIGGS6 & 6 & HATFLDD & 3 & PENALTY2 ($N=10$) & 10 \\
BOX ($N=10$) & 10 & HATFLDE & 3 & POWELLBSLS & 2 \\
BOX3 & 3 & HATFLDFL & 3 & POWELLSG ($N=16$) & 16 \\
BOXBODLS & 2 & HEART6LS & 6 & POWER ($N=20$) & 20 \\
BOXPOWER ($N=10$) & 10 & HEART8LS & 8 & QUARTC ($N=25$) & 25 \\
BROWNAL ($N=10$) & 10 & HELIX & 3 & RAT42LS & 3 \\
BROWNBS & 2 & HIELOW & 3 & ROSENBR & 2 \\
BROWNDEN & 4 & HILBERTA ($N=5$) & 5 & ROSENBRTU & 2 \\
BROYDN3DLS ($N=10$) & 10 & HILBERTB ($N=10$) & 10 & ROSZMAN1LS & 4 \\
BROYDNBDLS ($N=10$) & 10 & HIMMELBB & 2 & S308 & 2 \\
BRYBND ($N=10$) & 10 & HIMMELBF & 4 & SBRYBND ($N=10$) & 10 \\
CHNROSNB ($N=25$) & 25 & HIMMELBG & 2 & SCHMVETT ($N=3$) & 3 \\
CHNRSNBM ($N=25$) & 25 & HIMMELBH & 2 & SCOSINE ($N=10$) & 10 \\
CHWIRUT1LS & 3 & HUMPS & 2 & SCURLY10 ($N=10$) & 10 \\
CHWIRUT2LS & 3 & INDEFM ($N=10$) & 10 & SENSORS ($N=3$) & 3 \\
CLIFF & 2 & JENSMP & 2 & SINEVAL & 2 \\
COSINE ($N=10$) & 10 & KIRBY2LS & 5 & SINQUAD ($N=5$) & 5 \\
CUBE & 2 & KOWOSB & 4 & SISSER & 2 \\
DENSCHNA & 2 & LANCZOS1LS & 6 & SNAIL & 2 \\
DENSCHNB & 2 & LANCZOS2LS & 6 & SPARSINE ($N=10$) & 10 \\
DENSCHNC & 2 & LANCZOS3LS & 6 & SPARSQUR ($N=10$) & 10 \\
DENSCHND & 3 & LIARWHD ($N=36$) & 36 & SSBRYBND ($N=10$) & 10 \\
DENSCHNE & 3 & LOGHAIRY & 2 & SSCOSINE ($N=10$) & 10 \\
DENSCHNF & 2 & LSC1LS & 3 & SSI & 3 \\
DIXON3DQ ($N=10$) & 10 & LSC2LS & 3 & STREG & 4 \\
DJTL & 2 & MANCINO ($N=20$) & 20 & THURBERLS & 7 \\
DQDRTIC ($N=10$) & 10 & MARATOSB & 2 & TOINTGOR & 50 \\
DQRTIC ($N=10$) & 10 & MEXHAT & 2 & TOINTGSS ($N=10$) & 10 \\
ECKERLE4LS & 3 & MEYER3 & 3 & TOINTPSP & 50 \\
EDENSCH ($N=36$) & 36 & MGH09LS & 4 & TOINTQOR & 50 \\
ENGVAL1 ($N=2$) & 2 & MGH10LS & 3 & TQUARTIC ($N=10$) & 10 \\
ENGVAL2 & 3 & MISRA1BLS & 2 & TRIDIA ($N=20$) & 20 \\
ENSOLS & 9 & MISRA1DLS & 2 & VARDIM ($N=10$) & 10 \\
ERRINROS ($N=25$) & 25 & MOREBV ($N=10$) & 10 & VAREIGVL ($N=19$) & 20 \\
ERRINRSM ($N=25$) & 25 & NCB20B ($N=22$) & 22 & VESUVIALS & 8 \\
EXPFIT & 2 & NONCVXU2 ($N=10$) & 10 & VESUVIOLS & 8 \\
EXTROSNB ($N=5$) & 5 & NONCVXUN ($N=10$) & 10 & VESUVIOULS & 8 \\
FBRAIN3LS & 6 & NONDIA ($N=20$) & 20 & VIBRBEAM & 8 \\
FLETBV3M ($N=10$) & 10 & OSBORNEB & 11 & WATSON ($N=12$) & 12 \\
FLETCBV2 ($N=10$) & 10 & OSCIGRAD ($N=10$) & 10 & YFITU & 3 \\
FLETCBV3 ($N=10$) & 10 & OSCIPATH ($N=5$) & 5 & ZANGWIL2 & 2 \\
FLETCHBV ($N=10$) & 10 & PALMER1C & 8 & & \\
FLETCHCR ($N=10$) & 10 & PALMER1D & 7 & & \\
    \end{tabular}
    }  
    \caption{List of CUTEst problems used for numerical experiments.}
\end{table}

\end{document}